\documentclass[12pt,reqno]{amsart}
\usepackage{amssymb}
\usepackage{amsxtra}
\usepackage{mathrsfs}
\usepackage{paralist}
\usepackage{cite}
\usepackage[unicode,hypertex]{hyperref}
\textwidth=158mm
\textheight=220mm
\topmargin=-2mm
\oddsidemargin=5mm
\evensidemargin=5mm
\newtheorem{theorem}{Theorem}[section]
\newtheorem*{theorem*}{Theorem}
\newtheorem{prop}[theorem]{Proposition}
\newtheorem{corollary}[theorem]{Corollary}
\newtheorem{lemma}[theorem]{Lemma}
\theoremstyle{definition}
\newtheorem{definition}[theorem]{Definition}
\newtheorem{example}[theorem]{Example}
\newtheorem{examples}[theorem]{Examples}
\newtheorem{remark}[theorem]{Remark}
\newtheorem*{remark*}{Remark}
\newtheorem*{notation}{Notation}
\newtheorem*{convention}{Convention}
\newtheorem{problem}[theorem]{Problem}
\DeclareMathOperator{\Ker}{Ker}

\DeclareMathOperator{\spn}{span}

\newcommand*{\Ptens}{\mathop{\widehat\otimes}}
\newcommand*{\ptens}[1]{\mathop{\widehat\otimes}_{#1}}

\newcommand*{\xra}{\xrightarrow}
\newcommand*{\h}{\mathbf h}
\newcommand*{\lmod}{\mbox{-}\!\mathop{\mathsf{mod}}}
\newcommand*{\rmod}{\mathop{\mathsf{mod}}\!\mbox{-}}
\newcommand*{\bimod}{\mbox{-}\!\mathop{\mathsf{mod}}\!\mbox{-}}
\newcommand*{\Vect}{\mathsf{Vect}}

\renewcommand*{\dh}{\mathop{\mathrm{dh}}}
\newcommand*{\db}{\mathop{\mathrm{db}}}
\newcommand*{\dg}{\mathop{\mathrm{dg}}}
\newcommand*{\wdh}{\mathop{\mathrm{w.dh}}}
\newcommand*{\wdb}{\mathop{\mathrm{w.db}}}
\newcommand*{\wdg}{\mathop{\mathrm{w.dg}}}
\newcommand*{\CC}{\mathbb C}
\newcommand*{\DD}{\mathbb D}
\newcommand*{\R}{\mathbb R}
\newcommand*{\N}{\mathbb N}
\newcommand*{\Z}{\mathbb Z}
\newcommand*{\cO}{\mathscr O}

\newcommand*{\cH}{\mathscr H}

\newcommand*{\bfU}{\textbf{U}}
\newcommand*{\bfN}{\textbf{N}}
\newcommand*{\bfB}{\textbf{B}}
\newcommand*{\bfM}{\textbf{M}}
\newcommand*{\eps}{\varepsilon}

\begin{document}

\keywords{K\"othe algebra, (weak) global dimension,
(weak) bidimension, approximately contractible Fr\'echet algebra}
\subjclass[2010]{Primary 46M18, 46H25, 16E10; Secondary 46A45, 16D40, 18G50.}

\title[Homological properties of K\"othe algebras]{Homological dimensions
and approximate\\ contractibility
for K\"othe algebras}

\author{Alexei Yu. Pirkovskii}
\address{Department of Nonlinear Analysis and Optimization,
Faculty of Science\\
Peoples' Friendship University of Russia\\
Mikluho-Maklaya 6, 117198 Moscow, Russia\\
Email: pirkosha@sci.pfu.edu.ru, pirkosha@online.ru}

\maketitle

\begin{abstract}
We give a survey of our recent results on homological properties
of K\"othe algebras, with an emphasis on biprojectivity, biflatness,
and homological dimension. Some new results on the approximate contractibility
of K\"othe algebras are also presented.
\end{abstract}

\section{Introduction}
From the Banach algebra theory point of view, K\"othe algebras are weighted
locally convex analogues of $\ell^1$ with pointwise multiplication.
The underlying locally convex spaces of K\"othe algebras (K\"othe sequence spaces)
are classical objects, which have been studied since the 1940's
\cite{Kothe_stufen} and are often used to provide various examples and
counterexamples in the theory of topological vector spaces.
The idea to consider K\"othe spaces as algebras under pointwise multiplication
is apparently due to S.~J.~Bhatt and G.~M.~Deheri \cite{Bh_Deh}. The study of homological
properties of K\"othe algebras was started by the author \cite{Pir_bipr}.

The present paper is organized as follows. In Sections 2 and 3, we recall the definitions
and give some basic examples of K\"othe spaces and K\"othe algebras.
In Section 4, we give a brief outline of Topological Homology, i.e., the homology
theory for topological algebras. In Section 5, we introduce and discuss four conditions
(denoted (\bfU), (\bfN), (\bfB), and (\bfM)) on K\"othe sets. These conditions
are then applied to computing homological dimensions of K\"othe algebras in Section 6.
A detailed exposition of the results of Sections 5 and 6 (including proofs) can be found in
\cite{Pir_bipr,Pir_bipr2,Pir_msb,Pir_QJM}.
Section 7 contains some new results on the approximate contractibility of K\"othe
algebras. Among other things, we show that a nuclear biprojective K\"othe algebra
is approximately contractible. Finally, in Section 8 we formulate some open
problems related to homological properties of Fr\'echet algebras
and, in particular, of K\"othe algebras.

\section{K\"othe spaces}

Let $I$ be any set, and let $P$ be a set of nonnegative real-valued functions on $I$.
For $p\in P$, we will write $p_i$ for $p(i)$.
Recall that $P$ is a {\em K\"othe set} on $I$ if the following axioms are
satisfied:
\begin{align*}
\tag*{(P1)}
&\forall\,i\in I\quad\exists\, p\in P:\quad p_i>0\, ;\\
\tag*{(P2)}
&\forall\, p,q\in I\quad\exists\, r\in P:\quad\max\{ p_i,q_i\}\le r_i\quad\forall\, i\in I\, .
\end{align*}
Given a K\"othe set $P$, the {\em K\"othe space}
$\lambda(P)$ is defined as follows:
\begin{equation*}
\lambda(P)=
\Bigl\{ x=(x_i)\in \CC^I :
\| x\|_p=\sum_i |x_i|p_i <\infty\quad\forall\, p\in P\Bigr\}\, .
\end{equation*}
This is a complete locally convex space
with the topology determined by
the family of seminorms $\{\|\cdot\|_p : p\in P\}$. Clearly, $\lambda(P)$
is a Fr\'echet space if and only if $P$ contains an at most countable
cofinal subset.

\begin{convention}
To be definite, we will always assume that $P$ is countable, although some of
our results hold without this assumption.
\end{convention}

Let us give some standard examples of K\"othe spaces. In all these examples,
we set $I=\N$, so that elements of $P$ and of $\lambda(P)$ are sequences.

\begin{example}
\label{example:l1}
If $P$ consists of only one sequence $(1,1,\ldots)$, then $\lambda(P)=\ell^1$.
\end{example}

\begin{example}
For each $n\in\N$ set $p^{(n)}=(1,\ldots ,1,0,\ldots)$ with $1$ repeated $n$ times,
and let $P=\{ p^{(n)} : n\in\N\}$. Then $\lambda(P)=\CC^\N$, the space of all complex
sequences with the topology of pointwise convergence.
More generally, if $I$ is any set and $P$ is the family of all nonnegative
functions on $I$ with finite support, then $\lambda(P)=\CC^I$.
\end{example}

\begin{example}
\label{example:s}
For each $n\in\N$ set $p^{(n)}=(1^n,2^n,\ldots ,k^n,\ldots)$,
and let $P=\{ p^{(n)} : n\in\N\}$. The resulting K\"othe space $\lambda(P)$ is denoted
by $s$ and is called {\em the space of rapidly decreasing sequences}.
\end{example}

\begin{example}
\label{example:O(C)}
For each $n\in\N$ set $p^{(n)}=(n^1,n^2,\ldots ,n^k,\ldots)$,
and let $P=\{ p^{(n)} : n\in\N\}$. It is easy to show that the resulting
K\"othe space $\lambda(P)$ is topologically isomorphic to the space
$\cO(\CC)$ of entire functions with the topology of compact convergence.
Explicitly, the isomorphism $\cO(\CC)\to\lambda(P)$ takes each entire function
to the sequence of its Taylor coefficients at $0$.
\end{example}

\begin{example}
\label{example:pow_ser}
This example generalizes Examples \ref{example:s} and \ref{example:O(C)}.
Fix a real number $0<R\le\infty$ and a nondecreasing sequence
$\alpha=(\alpha_n)_{n\in\N}$ of positive numbers with $\lim_n\alpha_n=\infty$.
Let $P=\{ (r^{\alpha_k})_{k\in\N} : 0<r<R\}$. The resulting K\"othe space $\lambda(P)$
is denoted by $\Lambda_R(\alpha)$ and is called a {\em power series space}
(of \emph{finite type} if $R<\infty$, and of \emph{infinite type} if $R=\infty$).
Of course, the above K\"othe set $P$ is uncountable, but
we can easily make it countable (e.g., by considering only rational $r$)
without changing $\lambda(P)$.

It is easy to see that if $\alpha_n=\log n$, then $\Lambda_\infty(\alpha)=s$
(see Example \ref{example:s}). If $\alpha_n=n$, then
$\Lambda_R(\alpha)$ is topologically isomorphic to $\cO(\DD_R)$, the space
of holomorphic functions on the disk $\DD_R=\{ z\in\CC : |z|<R\}$
(see Example \ref{example:O(C)}).
\end{example}

\section{K\"othe algebras}
Throughout, all vector spaces and algebras are assumed to be over the field $\CC$
of complex numbers. All algebras are assumed to be associative, but
not necessarily unital. The unitization of an algebra $A$ is denoted by $A_+$.
By a {\em Fr\'echet algebra} we mean a complete metrizable locally convex
algebra (i.e., a topological algebra whose underlying space
is a Fr\'echet space). A {\em locally $m$-convex algebra} is a topological
algebra $A$ whose topology can be defined by a family of submultiplicative
seminorms (i.e., seminorms $\|\cdot\|$ satisfying $\| ab\|\le \| a\| \| b\|$
for all $a,b\in A$). Note that, unlike some authors, we do not require
Fr\'echet algebras to be locally $m$-convex.

Given a set $I$, $\CC^I$ is clearly an algebra under pointwise
multiplication. It is natural to ask when $\lambda(P)$ is a subalgebra of $\CC^I$,
and when it is a Fr\'echet algebra (under its canonical topology).
To answer these questions, let us introduce some notation.

\begin{notation}
Let $P$, $Q$ be K\"othe sets on $I$. We say that {\em $P$ is dominated by $Q$}
and write $P\prec Q$ if for each $p\in P$ there exist $q\in Q$ and $C>0$ such that
$p_i\le Cq_i$ for all $i\in I$ (we write $p\le Cq$ for brevity).
This is equivalent to say that $\lambda(Q)\subset\lambda(P)$,
and the embedding of $\lambda(Q)$ into $\lambda(P)$ is continuous.
In fact, since $P$ and $Q$ are assumed to be countable, it is easy to show
that $P\prec Q$ if and only if $\lambda(Q)\subset\lambda(P)$
(the continuity of the embedding will then hold automatically).
If $P\prec Q$ and $Q\prec P$, then we say that
$P$ and $Q$ are {\em equivalent} and write $P\sim Q$.
This means exactly that $\lambda(P)=\lambda(Q)$ topologically, or, equivalently,
that $\lambda(P)=\lambda(Q)$ as sets.
The {\em product} $P\cdot Q$ is defined to be the K\"othe set consisting of
all functions $pq=(p_i q_i)_{i\in I}$, where $p\in P$ and $q\in Q$.
Finally, we let $P^2=P\cdot P$, which is equivalent to the K\"othe set
$\{ p^2=(p_i^2)_{i\in I} : p\in P\}$.
\end{notation}

\begin{prop}
\label{prop:Kothe_alg}
For a K\"othe set $P$, the following conditions are equivalent:
\begin{enumerate}
\item[{\upshape (i)}] $\lambda(P)$ is a subalgebra of $\CC^I$;
\item[{\upshape (ii)}] $\lambda(P)$ is a subalgebra of $\CC^I$ and is a Fr\'echet algebra
under its canonical topology;
\item[{\upshape (iii)}] $P\prec P^2$.
\end{enumerate}
\end{prop}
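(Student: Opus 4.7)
\medskip
\noindent\textbf{Proof proposal.} The implication (ii)$\Rightarrow$(i) is immediate from the statement, so the real content is (iii)$\Rightarrow$(ii) and (i)$\Rightarrow$(iii).

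For (iii)$\Rightarrow$(ii), the plan is a direct norm estimate. Assume $P\prec P^2$, fix $p\in P$, and choose $q\in P$ and $C>0$ with $p_i\le C q_i^2$ for all $i\in I$. Given $x,y\in\lambda(P)$, I would use the inequality $\sum_i a_ib_i\le\bigl(\sum_i a_i\bigr)\bigl(\sum_j b_j\bigr)$ for nonnegative sequences (which just drops off-diagonal positive terms) to obtain
\begin{equation*}
\|xy\|_p=\sum_i|x_i y_i|p_i\le C\sum_i(|x_i|q_i)(|y_i|q_i)\le C\|x\|_q\|y\|_q.
\end{equation*}
This shows simultaneously that $\lambda(P)$ is closed under pointwise multiplication and that multiplication is jointly continuous with respect to the defining seminorms. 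Combined with the standing convention that $P$ is countable (so $\lambda(P)$ is Fr\'echet), this gives (ii).

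For (i)$\Rightarrow$(iii), the obstacle is that being merely a subalgebra of $\CC^I$ gives no a priori continuity of multiplication; this is where I would invoke automatic continuity. For each fixed $a\in\lambda(P)$, the left-multiplication operator $L_a\colon\lambda(P)\to\lambda(P)$, $x\mapsto ax$, is linear and has closed graph: convergence in $\lambda(P)$ implies coordinatewise convergence (apply any $\|\cdot\|_p$ with $p_i>0$, which exists by axiom (P1)), and pointwise multiplication is continuous in each coordinate, so the graph closes. By the closed graph theorem for Fr\'echet spaces, each $L_a$ is continuous, i.e., multiplication is separately continuous; the standard Banach--Steinhaus argument for Fr\'echet algebras then upgrades this to joint continuity. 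Hence for the given $p\in P$ there exist $q\in P$ and $C>0$ with $\|xy\|_p\le C\|x\|_q\|y\|_q$ for all $x,y\in\lambda(P)$.

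To extract the condition $P\prec P^2$, I would test this inequality on the standard unit vectors. Each $e_i\in\CC^I$ lies in $\lambda(P)$ with $\|e_i\|_r=r_i$ for every $r\in P$, and $e_i\cdot e_i=e_i$, so plugging $x=y=e_i$ into the joint-continuity estimate yields $p_i\le C q_i^2$ for all $i$. Since the K\"othe set $\{q^2:q\in P\}$ is equivalent to $P^2$, this is exactly $P\prec P^2$, completing the circle. The only genuinely delicate step is the automatic joint continuity in (i)$\Rightarrow$(iii); everything else is a clean computation.
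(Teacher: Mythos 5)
Your proof is correct, but your argument for $\mathrm{(i)}\Longrightarrow\mathrm{(iii)}$ takes a genuinely different route from the paper's. The paper proves this implication by contraposition with an explicit construction: assuming $P\not\prec P^2$, it extracts (using the countability of $P$ to arrange $P=\{p^{(k)}\}$ with $p^{(k)}\le p^{(k+1)}$) indices $i_k$ with $p^{(1)}_{i_k}>k^4\bigl(p^{(k)}_{i_k}\bigr)^2$ and builds a gliding-hump element $x$ with $x_{i_k}=1/(k^2 p^{(k)}_{i_k})$, which lies in $\lambda(P)$ while $x^2$ does not --- directly contradicting (i). You instead invoke automatic continuity: the closed graph theorem gives separate continuity of multiplication, the theorem on separately continuous bilinear maps on Fr\'echet spaces upgrades this to joint continuity, and then evaluating the resulting estimate $\|xy\|_p\le C\|x\|_q\|y\|_q$ on the idempotents $e_i$ yields $p_i\le Cq_i^2$. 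Both arguments are sound and both use the countability of $P$ in an essential way (the paper diagonally, you via metrizability for the closed graph and Banach--Steinhaus theorems); the Remark following the proposition confirms that for uncountable $P$ condition (iii) corresponds to joint continuity rather than to (i) alone, which is consistent with your reliance on the Fr\'echet hypothesis. What your approach buys is brevity and a transparent explanation of why the unit vectors encode $P\prec P^2$; what the paper's approach buys is self-containedness --- it exhibits a concrete element witnessing the failure of (i), without appealing to Baire-category machinery. Your $\mathrm{(iii)}\Longrightarrow\mathrm{(ii)}$ is the same estimate as the paper's, and $\mathrm{(ii)}\Longrightarrow\mathrm{(i)}$ is trivial in both.
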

\begin{proof}
$\mathrm{(i)}\Longrightarrow\mathrm{(iii)}$.
Since $P$ is countable, we may assume that $P=\{ p^{(k)} : k\in\N\}$ and that
$p^{(k)}\le p^{(k+1)}$ for all $k$.
Assume, towards a contradiction, that $P$ is not dominated by $P^2$.
Then there exists $m\in\N$ such that there is no $k\in\N$ and $C>0$ satisfying
$p^{(m)}\le C (p^{(k)})^2$.
Without loss of generality, we may assume that $m=1$.
Then for each $k\in\N$ there exists $i_k\in I$ such that
\begin{equation}
\label{k^4}
p_{i_k}^{(1)} > k^4 (p_{i_k}^{(k)})^2.
\end{equation}
We may also assume that $i_k\ne i_l$ for $k\ne l$.
Note that \eqref{k^4} implies that $p_{i_k}^{(1)}>0$
and hence $p_{i_k}^{(k)}>0$ for all $k$.
Now define a function $x=(x_i)_{i\in I}$ on $I$ by
\begin{equation*}
x_i=
\begin{cases}
\dfrac{1}{k^2 p_{i_k}^{(k)}}, & \text{if $i=i_k$},\\
0, & \text{if $i\notin\{ i_1,i_2,\ldots\}$}.
\end{cases}
\end{equation*}
For each $l\in\N$ we have
\[
\sum_{k\ge l} |x_{i_k}| p_{i_k}^{(l)}
\le \sum_{k\ge l} |x_{i_k}| p_{i_k}^{(k)}
=\sum_{k\ge l} \frac{1}{k^2}<\infty,
\]
so that $x\in\lambda(P)$. On the other hand, \eqref{k^4} implies that
\[
\sum_i |x_i^2| p_i^{(1)}=\sum_k |x_{i_k}^2| p_{i_k}^{(1)}
= \sum_k \frac{p_{i_k}^{(1)}}{k^4 (p_{i_k}^{(k)})^2}=\infty,
\]
i.e., $x^2\notin\lambda(P)$. Thus $\lambda(P)$ is not a subalgebra of $\CC^I$,
which contradicts (i).

$\mathrm{(iii)}\Longrightarrow\mathrm{(ii)}$.
Given $p\in P$, find $q\in P$ and $C>0$ such that $p\le Cq$.
Then for each $a,b\in\lambda(P)$ we have
\begin{equation}
\label{ab}
\| ab\|_p=\sum_i |a_i b_i| p_i\le C\sum_i | a_i| |b_i| q_i^2
\le C \| a\|_q \| b\|_q<\infty.
\end{equation}
Therefore $ab\in\lambda(P)$, i.e., $\lambda(P)$ is a subalgebra of $\CC^I$.
The continuity of the multiplication on $\lambda(P)$ is immediate from \eqref{ab}.

$\mathrm{(ii)}\Longrightarrow\mathrm{(i)}$. This is clear.
\end{proof}

\begin{remark}
If $P$ is arbitrary (i.e., not necessarily countable), then condition (iii)
is equivalent to the assertion that $\lambda(P)$
is a subalgebra of $\CC^I$ and is a topological algebra with jointly continuous
multiplication.
\end{remark}

\begin{definition}
Algebras of the form $\lambda(P)$ (where $P$ is a K\"othe set
satisfying $P\prec P^2$) are called {\em K\"othe algebras}.
\end{definition}

\begin{remark}
In many natural cases (see examples below), we have $p_i\in\{ 0\}\cup [1,+\infty)$ for
all $p\in P,\; i\in I$. This implies that $p_i\le p_i^2$, so that $P\prec P^2$,
and $\lambda(P)$ is a K\"othe algebra. Moreover, the seminorms $\|\cdot\|_p$
are submultiplicative in this case, and so $\lambda(P)$ is locally $m$-convex.
\end{remark}

\begin{examples}
Clearly, $\ell^1$ and $\CC^I$ are K\"othe algebras.
The power series space $\Lambda_R(\alpha)$ is a K\"othe algebra if and only
if $R\ge 1$. Moreover, if $R>1$, then $\Lambda_R(\alpha)$ is locally $m$-convex.
In particular, identifying the space $\cO(\DD_R)$ (for $R\ge 1$)
with $\Lambda_R(\{ n\})$ (see Example \ref{example:pow_ser}), we see that
$\cO(\DD_R)$ becomes a Fr\'echet algebra under the
``componentwise'' product of the Taylor expansions
of holomorphic functions (the {\em Hadamard product}; see \cite{Render}).
The resulting topological algebra is denoted by $\cH(\mathbb D_R)$
and is called the {\em Hadamard algebra}.
\end{examples}

\section{Topological Homology}
Now let us briefly recall some basic notions of Topological Homology, i.e.,
the homology theory for locally convex topological algebras. This theory was developed
in the early 1970's by A.~Ya.~Helemskii (see, e.g., \cite{X_dg}) in the special case
of Banach algebras.
A few years later a similar theory was independently discovered
by R.~Kiehl and J.~L.~Verdier \cite{KV} and by J.~L.~Taylor \cite{T1}
in the context of more general topological algebras.
Let us briefly recall the basics of this theory.
For details, we refer to Helemskii's monograph \cite{X1}.

To be definite, we will work only with Fr\'echet modules over Fr\'echet algebras.
Let $A$ be a Fr\'echet algebra.
A {\em left Fr\'echet $A$-module} is a left $A$-module $X$ endowed with
a Fr\'echet space topology in such a way that the action $A\times X\to X$
is continuous.
Left Fr\'echet $A$-modules and their continuous morphisms form a category
denoted by $A\lmod$. Given $X,Y\in A\lmod$,
the space of morphisms from $X$ to $Y$ will be denoted by $\h_A(X,Y)$.
The categories $\rmod A$ and $A\bimod A$ of right
Fr\'echet $A$-modules and of Fr\'echet $A$-bimodules are defined similarly.

If $X$ is a right Fr\'echet $A$-module and $Y$
is a left Fr\'echet $A$-module, then their {\em $A$-module tensor product}
$X\ptens{A}Y$ is defined to be
the quotient $(X\Ptens Y)/N$, where $N\subset X\Ptens Y$
is the closed linear span of all elements of the form
$x\cdot a\otimes y-x\otimes a\cdot y$
($x\in X$, $y\in Y$, $a\in A$).
As in pure algebra, the $A$-module tensor product can be characterized
by the universal property that, for each Fr\'echet space $E$,
there is a natural bijection between the set of all
continuous $A$-balanced bilinear maps from $X\times Y$ to $E$
and the set of all continuous linear maps from
$X\ptens{A}Y$ to $E$.

A chain complex $C_\bullet=(C_n,d_n)_{n\in\Z}$ in $A\lmod$ is {\em admissible} if
it splits in the category of topological vector spaces, i.e., if it has
a contracting homotopy consisting of continuous linear maps. Geometrically,
this means that $C_\bullet$ is exact, and $\Ker d_n$ is a complemented subspace
of $C_n$ for each $n$.

Let $\Vect$ denote the category of vector spaces and linear maps.
A left Fr\'echet $A$-module $P$ is {\em projective} if the functor
$\h_A(P,-)\colon A\lmod\to\Vect$ is exact in the sense that it takes admissible sequences
of Fr\'echet $A$-modules to exact sequences of vector spaces.
A left Fr\'echet $A$-module $F$ is {\em flat} if the tensor product functor
$(-)\ptens{A} F\colon \rmod A\to\Vect$ is exact in the same sense as above.
It is known that every projective Fr\'echet module is flat.

A {\em resolution} of $X\in A\lmod$ is a pair $(P_\bullet,\eps)$
consisting of a nonnegative chain complex
$P_\bullet$ in $A\lmod$ and a morphism $\eps\colon P_0\to X$ making the sequence
$P_\bullet\xra{\eps} X\to 0$ into an admissible complex.
The {\em length} of $P_\bullet$ is the minimum integer $n$
such that $P_i=0$ for all $i>n$, or $\infty$ if there is no such $n$.
If all the $P_i$'s are projective (respectively, flat), then
$(P_\bullet,\eps)$ is called a {\em projective resolution}
(respectively, a {\em flat resolution}) of $X$.
It is a standard fact that $A\lmod$ has {\em enough projectives},
i.e., each left Fr\'echet $A$-module has a projective resolution.
The same is true of $\rmod A$ and $A\bimod A$.

The {\em projective homological dimension} of $X\in A\lmod$ is the
minimum integer $n=\dh_A X\in\Z_+\cup\{\infty\}$
with the property that $X$ has a projective resolution of length $n$.
Similarly, the {\em weak homological dimension} of $X\in A\lmod$ is the
minimum integer $n=\wdh_A X\in\Z_+\cup\{\infty\}$
with the property that $X$ has a flat resolution of length $n$.
Heuristically, the projective (respectively, flat) dimension measures
how far $X$ is from being projective (respectively, flat).
In particular, $\dh_A X=0$ if and only if $X$ is projective, and
$\wdh_A X=0$ if and only if $X$ is flat.
Since each projective module is flat, we clearly have $\wdh_A X\le\dh_A X$.

Given a Fr\'echet algebra $A$, the {\em global dimension} and
the {\em weak global dimension} of $A$
are defined by
\begin{align*}
\dg A&=\sup\{ \dh\nolimits_A X \,|\, X\in A\lmod\},\\
\wdg A&=\sup\{ \wdh\nolimits_A X \,|\, X\in A\lmod\}.
\end{align*}
The {\em bidimension} and the {\em weak bidimension} of
$A$ are defined by
\begin{align*}
\db A&=\dh\nolimits_{A-A} A_+,\\
\wdb A&=\wdh\nolimits_{A-A} A_+,
\end{align*}
where the subscript ``$A-A$'' means that we work in the category $A\bimod A$.
We clearly have $\wdg A\le\dg A$ and $\wdb A\le\db A$.
It is also true (but less obvious) that $\dg A\le\db A$ and $\wdg A\le\wdb A$.

To complete our short introduction to Topological Homology, let us recall some
``homological triviality'' conditions for Fr\'echet algebras.
By definition, a Fr\'echet algebra $A$ is {\em biprojective}
(respectively, {\em biflat}) if $A$ is projective (respectively, flat)
in $A\bimod A$. Similarly, $A$ is {\em contractible}
(respectively, {\em amenable}) if $A_+$ is projective (respectively, flat)
in $A\bimod A$.
In fact, there are many other equivalent definitions of the above notions.
For example, it is immediate from the definition of $\db$ that
$A$ is contractible if and only if $\db A=0$. This is equivalent to say that
for each Fr\'echet $A$-bimodule $X$ all continuous derivations
from $A$ to $X$ are inner (see \cite[7.3.37]{X2}). Similarly, $A$ is amenable
if and only if $\wdb A=0$,
which is equivalent to say that for each Fr\'echet $A$-bimodule $X$ all continuous
derivations from $A$ to $X^*$ (where $X^*$ is the strong dual of $X$)
are inner (see \cite[7.3.37]{X2} for the Banach algebra case and
\cite[Corollary 3.5]{Pir_msb} for the Fr\'echet algebra case).
In fact, the above characterization of amenability of Banach algebras
in terms of derivations was the original definition
of amenability due to B.~E.~Johnson \cite{Johnson_CBA}.

Let us also remark that a Fr\'echet algebra $A$ is biprojective
if and only if the product map $A\Ptens A\to A$ is a retraction
in $A\bimod A$ \cite[IV.5.6]{X1}. There is also a dual characterization
of biflat Banach algebras \cite[VII.2.7]{X1}, but we will not use it.
Recall also that a Fr\'echet algebra $A$ is contractible if
and only if $A$ is biprojective and unital \cite[IV.5.8]{X1}.

The following theorem is essentially due to Helemskii \cite[V.2.28]{X1},
\cite[2.5.8]{X_HOA}; see also \cite[Proposition 4.8]{Pir_msb} for
the Fr\'echet algebra case.

\begin{theorem*}
Let $A$ be a Fr\'echet algebra.
\begin{compactenum}
\item[{\upshape (i)}] If $A$ is biprojective, then $\db A\le 2$.
\item[{\upshape (ii)}] If $A$ is biflat, then $\wdg A\le 2$.
\item[{\upshape (iii)}] If $A$ is biflat and projective in $A\lmod$
or in $\rmod A$, then $\wdb A\le 2$.
\item[{\upshape (iv)}] If $A$ is a biflat Banach algebra, then $\wdb A\le 2$.
\end{compactenum}
\end{theorem*}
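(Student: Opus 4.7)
\emph{Plan.} The four parts share a common backbone: I would construct an explicit resolution of length at most $2$ of the relevant object, using the biprojectivity or biflatness assumption at the crucial step.

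For part (i), the starting point is the admissible short exact sequence $0\to A\to A_+\to\CC\to 0$ of $A$-bimodules (the splitting in $\Vect$ is $A_+=A\oplus\CC$). The standard dimension inequality for an admissible SES gives
\[
\db A=\dh_{A-A}A_+ \le \max\{\dh_{A-A}A,\dh_{A-A}\CC\},
\]
and biprojectivity is exactly $\dh_{A-A} A=0$, so the task reduces to $\dh_{A-A}\CC\le 2$. For this, I would build the candidate length-$2$ resolution
\[
0\to A\Ptens A \xrightarrow{d_2} (A_+\Ptens A)\oplus (A\Ptens A_+) \xrightarrow{d_1} A_+\Ptens A_+ \xrightarrow{d_0}\CC\to 0,
\]
with $d_0$ the double augmentation, $d_1(x,y)=x-y$ via the two natural inclusions, and $d_2(a\otimes b)=(a\otimes b,a\otimes b)$. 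Exactness and admissibility are immediate from the direct-sum decomposition $A_+\Ptens A_+=A\Ptens A\oplus A\Ptens\CC\oplus\CC\Ptens A\oplus\CC$, and the middle term is the free bimodule on $\CC$. The remaining projectivity statements are where biprojectivity enters: given a bimodule section $\sigma\colon A\to A\Ptens A$ of the multiplication, the ``doubled'' map $\tilde\sigma(a\otimes b)=\sigma(a)\otimes\sigma(b)$ realises $A\Ptens A$ as a retract of the free bimodule $A_+\Ptens A\Ptens A\Ptens A_+$, hence projective; one-sided variants of the same argument handle $A_+\Ptens A$ and $A\Ptens A_+$.

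For parts (ii)--(iv) I would follow the same blueprint but with flat resolutions. In (ii), rather than aiming at the bidimension, I would bound $\wdh_A X\le 2$ for every $X\in A\lmod$ directly: take the admissible short exact sequence $0\to K\to A_+\Ptens X\to X\to 0$ (with $A_+\Ptens X$ free in $A\lmod$), and use biflatness of $A$ to show that the kernel $K$, which is naturally built from $A$ and $X$, is flat. This yields $\wdg A\le 2$. In (iii), the extra one-sided projectivity of $A$ allows me to transport a flat resolution from $A\lmod$ to $A\bimod A$ via a base-change construction, giving $\wdb A\le 2$. In (iv), the Banach-algebra setting permits a duality argument: flatness of an $A$-bimodule corresponds to injectivity of its dual, so biflatness of $A$ gives an injective resolution of $A^*_+$ of length $2$ in the dual category, which dualizes back to a flat resolution of $A_+$ of the same length.

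The main obstacle I expect is the flat case. Biflatness is strictly weaker than biprojectivity and supplies only a dual/weak section of the multiplication, so the clean retract argument of (i) does not transfer to (ii)--(iv) directly. Each of the three flat parts has its own workaround: (ii) argues flatness of $K$ indirectly from biflatness of $A$; (iii) bypasses the issue using one-sided projectivity and base change; and (iv) leans on Banach-space duality (weak-$*$ compactness and dual modules) to furnish the retractions that are missing in the general Fr\'echet setting.
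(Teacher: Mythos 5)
First, note that the paper does not actually prove this theorem: it is quoted with references to \cite[V.2.28]{X1}, \cite[2.5.8]{X_HOA} and \cite[Proposition 4.8]{Pir_msb}, so your proposal can only be measured against the proofs given there. Your part (i) is essentially Helemskii's argument and is correct: the admissible sequence $0\to A\to A_+\to\CC\to 0$ together with $\dh_{A-A}A=0$ reduces everything to $\dh_{A-A}\CC\le 2$, and your length-two complex is the standard resolution. The one point you should make explicit is why the three non-free terms are projective bimodules: a free bimodule $A_+\Ptens E\Ptens A_+$ is free as a left (and as a right) module, so a biprojective $A$ is automatically projective in $A\lmod$ and in $\rmod A$, and then each of $A_+\Ptens A$, $A\Ptens A_+$ and $A\Ptens A$ (outer actions) is a tensor product of a projective left module with a projective right module, hence a projective bimodule.

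The flat parts are where the proposal breaks down. In (ii) your key claim --- that the kernel $K$ of $A_+\Ptens X\to X$ is flat --- is false and would in any case prove too much: a flat $K$ gives a flat resolution of $X$ of length one, hence $\wdg A\le 1$, whereas $\ell^1$ is biflat with $\wdg\ell^1=2$ (Section 6 of the paper; taking $X=\ell^\infty$ shows that $K$ is genuinely non-flat). What must be proved is $\wdh_A K\le 1$, and the mechanism by which biflatness forces this (the vanishing of the derived functors of the bimodule tensor product against $A$ in positive degrees, transported to one-sided $\Tor$ with a shift of two) is exactly the step your sketch omits. In (iii) and (iv) the real crux is the single term $A\Ptens A$ with outer actions in the length-two resolution from (i): one must show it is \emph{flat} in $A\bimod A$, which is precisely the paper's open Problem 8.2 in general. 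Part (iii) settles this via \cite[Proposition 3.6]{Pir_msb} ($X\Ptens Y$ is a flat bimodule when $X$ is flat and $Y$ is projective) --- that is what the one-sided projectivity hypothesis is for, and your ``base-change'' phrasing does not engage with it. Part (iv) settles it by Banach duality ($X$ is flat iff $X^*$ is injective, combined with $(X\Ptens Y)^*\cong\cL(X,Y^*)$); but one does not ``dualize an injective resolution back'': one bounds the injective dimension of $A_+^*$ and invokes the equality of $\wdh_{A-A}A_+$ with that injective dimension. The very fact that ``biflat Fr\'echet $\Rightarrow\wdb A\le 2$'' is listed as open should warn you that no soft argument of the kind you sketch can close (ii)--(iv) without one of these specific devices.
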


We do not know whether part (iv) of the above theorem holds for
biflat Fr\'echet algebras.

\section{Some conditions on K\"othe sets}
Our task is to compute the homological dimensions $\dg$, $\db$, $\wdg$,
and $\wdb$ of K\"othe algebras. To this end, let us introduce some conditions
on K\"othe sets.

Let $\lambda(P)$ be a K\"othe algebra.

\begin{definition}
We say that $P$ satisfies (\bfU) if
\begin{equation*}
\sum_i p_i<\infty\quad\text{for all $p\in P$}.
\end{equation*}
\end{definition}

Clearly, $P$ satisfies (\bfU) if and only if $\lambda(P)$ is unital
(whence the notation ``(\bfU)'').

\begin{example}
$\CC^I$ is unital.
\end{example}

\begin{example}
The Hadamard algebra $\cH(\DD_1)$ is unital. The identity element of $\cH(\DD_1)$
is the function $f(z)=(1-z)^{-1}$, whose Taylor coefficients are all equal to $1$.
\end{example}

\begin{example}
\label{example:Lambda_notuni}
The algebra $\Lambda_R(\alpha)$ is not unital unless $R=1$. In particular,
the algebra $s$ of rapidly decreasing sequences and the Hadamard algebras
$\cH(\DD_R)$ for $R>1$ are not unital.
\end{example}

\begin{definition}
\label{def:N}
We say that $P$ satisfies (\bfN) if
\begin{equation*}
\forall p\in P\quad \exists\, q\in P\quad\exists\,\alpha\in\ell^1(I)\; :\quad
p\le\alpha q.
\end{equation*}
\end{definition}

The {\em Grothendieck--Pietsch criterion} (see \cite[6.1.2]{Pietsch}) states that
$P$ satisfies (\bfN) if and only if $\lambda(P)$ is nuclear
(whence the notation ``(\bfN)'').

There is another useful form of the Grothendieck--Pietsch criterion. Given a K\"othe set $P$,
let
\begin{equation*}
\lambda^\infty(P)=
\Bigl\{ a=(a_i)\in \CC^I :
\| a\|_p^\infty=\sup_i |a_i|p_i <\infty\quad\forall\, p\in P\Bigr\}\, .
\end{equation*}
This is a complete locally convex space
with the topology determined by
the family of seminorms $\{\|\cdot\|_p^\infty : p\in P\}$.
Clearly, $\lambda(P)\subset\lambda^\infty(P)$, and the embedding is continuous.
The Grothendieck-Pietsch criterion can be reformulated as follows:
$\lambda(P)$ is nuclear if and only if
$\lambda(P)=\lambda^\infty(P)$ topologically (see \cite[6.1.3]{Pietsch}).
Since $P$ is assumed to be countable, this is equivalent to say that
$\lambda(P)=\lambda^\infty(P)$ as sets (see \cite[28.16]{MV}).

\begin{example}
$\CC^I$ is nuclear. This is a standard fact (moreover, it is known that
a product of nuclear spaces is nuclear), but this is also immediate from the
Grothendieck--Pietsch criterion.
\end{example}

\begin{example}
\label{example:Lambda_R_nucl}
The Grothendieck--Pietsch criterion implies that
\[
\Lambda_R(\alpha)\text{ is nuclear } \iff
\begin{cases}
\sup_n (\log n)/\alpha_n<\infty & \text{ if } R=\infty;\\
\lim_n (\log n)/\alpha_n=0 & \text{ if } R<\infty
\end{cases}
\]
(see, e.g., \cite[29.6 and 28.16]{MV}).
In particular, $s$ and $\cH(\DD_R)$ are nuclear.
\end{example}

\begin{definition}
We say that $P$ satisfies (\bfB) if
\begin{equation*}
P\sim P^2.
\end{equation*}
\end{definition}

The next theorem justifies the notation ``(\bfB)''.

\begin{theorem}[{\cite[5.2]{Pir_msb}}]
\label{thm:kothe_bipr}
Let $A=\lambda(P)$ be a K\"othe algebra. The following conditions
are equivalent:
\begin{compactenum}
\item[{\upshape (i)}] $P$ satisfies {\upshape (\bfB)};
\item[{\upshape (ii)}] $A$ is biprojective;
\item[{\upshape (iii)}] $A$ is biflat;
\item[{\upshape (iv)}] $A$ is flat in $A\lmod$;
\item[{\upshape (v)}] the product map $A\Ptens A\to A,\; a\otimes b\mapsto ab$, is onto.
\end{compactenum}
\end{theorem}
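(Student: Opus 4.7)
My approach is to run the cycle
$\mathrm{(i)}\Rightarrow\mathrm{(ii)}\Rightarrow\mathrm{(iii)}\Rightarrow\mathrm{(iv)}\Rightarrow\mathrm{(v)}\Rightarrow\mathrm{(i)}$,
organised around the canonical idempotents $e_i\in A$ ($i\in I$) given by $(e_i)_j=\delta_{ij}$. These satisfy $e_ie_j=\delta_{ij}e_i$, every $a=(a_i)\in A$ is the convergent sum $a=\sum_i a_ie_i$, and the crude estimate $\|ab\|_{pq}\le\|a\|_p\|b\|_q$ (valid for all $p,q\in P$ and $a,b\in\lambda(P)$) controls products against the ``square'' seminorms built from $P^2$.

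The first preparatory step is to compute the image of the product map $\pi\colon A\Ptens A\to A$. The estimate above, applied termwise to an absolutely convergent representation $z=\sum_n a^{(n)}\otimes b^{(n)}$, shows that $\pi(z)\in\lambda(P^2)$. For the reverse inclusion, given $c\in\lambda(P^2)$ the series $z=\sum_i c_i\,e_i\otimes e_i$ converges in $A\Ptens A$ precisely because $\sum_i|c_i|p_iq_i<\infty$ for every $p,q\in P$, and $\pi(z)=c$. Hence $\pi(A\Ptens A)=\lambda(P^2)$ as subsets of $A$, and the equivalence $\mathrm{(v)}\Leftrightarrow\mathrm{(i)}$ is immediate: $\pi$ surjects onto $A=\lambda(P)$ iff $\lambda(P)\subset\lambda(P^2)$, which combined with the always-valid inclusion $\lambda(P^2)\subset\lambda(P)$ (a restatement of $P\prec P^2$) is equivalent to $P\sim P^2$.

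For $\mathrm{(i)}\Rightarrow\mathrm{(ii)}$ the very same formula $\rho(a)=\sum_i a_i\,e_i\otimes e_i$ now defines a continuous linear map $\rho\colon A\to A\Ptens A$: under (i) any $a\in\lambda(P)=\lambda(P^2)$ makes the series converge. A direct pointwise check using orthogonality of the $e_i$ shows that $\rho$ is both a left and a right $A$-module morphism---both $a\cdot\rho(b)$ and $\rho(a)\cdot b$ equal $\sum_i a_ib_i\,e_i\otimes e_i=\rho(ab)$---and $\pi\rho=\id_A$ is immediate. Thus $A$ is a bimodule retract of the free bimodule $A\Ptens A$, which by Helemskii's criterion recalled in Section~4 is exactly biprojectivity. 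Next, $\mathrm{(ii)}\Rightarrow\mathrm{(iii)}$ is the standard fact that projective Fr\'echet modules are flat. For $\mathrm{(iii)}\Rightarrow\mathrm{(iv)}$, the partial sums $u_n=\sum_{i\le n}e_i$ form a sequential approximate identity for $A$ (tails of convergent series vanish in every seminorm $\|\cdot\|_p$), so the standard argument that bimodule flatness plus an approximate identity yields one-sided flatness applies. Finally, $\mathrm{(iv)}\Rightarrow\mathrm{(v)}$ is obtained by tensoring the admissible short exact sequence $0\to A\to A_+\to\CC\to 0$ in $\rmod A$ with $A$ over $A$: left flatness preserves exactness and produces an identification of $A\ptens{A}A$ with a subspace of $A$; feeding this into the preparatory formula $\pi(A\Ptens A)=\lambda(P^2)$ forces $\lambda(P^2)=\lambda(P)$, which by the first paragraph is (v).

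The main obstacle is the passage $\mathrm{(iii)}\Rightarrow\mathrm{(iv)}\Rightarrow\mathrm{(v)}$: in the Fr\'echet (as opposed to Banach) setting one does not have Cohen--Hewitt factorization, so the conclusion ``product map onto'' cannot be pulled out of approximate identities and one-sided flatness by purely soft arguments. What rescues the K\"othe case is the very explicit preparatory identification $\pi(A\Ptens A)=\lambda(P^2)$, which converts the density-versus-surjectivity question into a comparison of K\"othe sets and is presumably the reason the theorem is so clean for K\"othe algebras specifically.
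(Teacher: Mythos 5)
The survey states this theorem without proof (it is imported verbatim from \cite[Theorem 5.2]{Pir_msb}), so there is no in-paper argument to compare against; judged on its own terms, your skeleton is the natural one and most of it is complete and correct: the identification $\pi(A\Ptens A)=\lambda(P^2)$ via the idempotents $e_i$, the resulting equivalence $\mathrm{(v)}\Leftrightarrow\mathrm{(i)}$, the diagonal splitting $\rho(a)=\sum_i a_ie_i\otimes e_i$ for $\mathrm{(i)}\Rightarrow\mathrm{(ii)}$, and $\mathrm{(ii)}\Rightarrow\mathrm{(iii)}$ all check out. The first genuine problem is $\mathrm{(iii)}\Rightarrow\mathrm{(iv)}$: you appeal to ``the standard argument that bimodule flatness plus an approximate identity yields one-sided flatness,'' but no such argument is available here --- your approximate identity $u_n=\sum_{i\le n}e_i$ is unbounded in general (already for $s$), so the Banach-algebra machinery built on \emph{bounded} approximate identities does not transfer. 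The implication is nevertheless true, and for a softer reason that needs no approximate identity at all: for any right Fr\'echet $A$-module $Y$ and any Fr\'echet $A$-bimodule $X$ there is a natural isomorphism $Y\ptens{A}X\cong(Y\Ptens A_+)\ptens{A^e}X$ with $A^e=A_+\Ptens A_+^{\op}$, and $(-)\Ptens A_+$ carries admissible complexes of right $A$-modules to admissible complexes of right $A^e$-modules; hence a flat Fr\'echet bimodule is automatically flat as a one-sided module. You should replace your sentence by this (or a reference to it).

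The second soft spot is $\mathrm{(iv)}\Rightarrow\mathrm{(v)}$, where ``produces an identification of $A\ptens{A}A$ with a subspace of $A$'' stops short of the point that actually does the work. Applying $(-)\ptens{A}A$ to the admissible sequence $0\to A\to A_+\to\CC\to 0$ and using $A_+\ptens{A}A\cong A$ and $\CC\ptens{A}A\cong A/\overline{A^2}$, exactness at the middle term yields $\pi(A\Ptens A)=\overline{A^2}$ --- the \emph{closure} of $A^2$ in $A$, not $A$ itself. To conclude you must add the observation that $A^2$ contains every $e_i=e_i^2$, hence all finitely supported elements, and these are dense in $\lambda(P)$ (the tails $a-au_n$ tend to $0$ in every $\|\cdot\|_p$); only then does $\pi(A\Ptens A)=A$, which your preparatory formula converts into $\lambda(P^2)=\lambda(P)$. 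This also corrects the diagnosis in your closing paragraph: the issue circumvented here is not Cohen--Hewitt factorization but precisely this density step combined with the exactness computation identifying the image of the multiplication map with $\overline{A^2}$. With these two repairs the proof is complete.
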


\begin{example}
\label{example:CC^I_contr}
$\CC^I$ is biprojective. Since $\CC^I$ is also unital, it follows that
$\CC^I$ is contractible. This was first observed by Taylor \cite[5.9]{T1}
(see also \cite[IV.5.27]{X1}),
but this is also immediate from Theorem~\ref{thm:kothe_bipr}.
\end{example}

\begin{remark}
As was shown by Helemskii \cite[IV.5.27]{X1}, each complete commutative
contractible locally $m$-convex
algebra is isomorphic to $\CC^I$ for some $I$.
\end{remark}

\begin{example}
\label{example:l1_bipr}
$\ell^1$ is biprojective. This fact is due to Helemskii \cite[IV.5.9]{X1},
but this is also immediate from Theorem~\ref{thm:kothe_bipr}.
\end{example}

\begin{example}
\label{example:Lambda_R_bipr}
Theorem \ref{thm:kothe_bipr} implies that $\Lambda_R(\alpha)$ is biprojective
if and only if either $R=1$ or $R=\infty$
\cite[Example 3.5]{Pir_bipr}.
For instance, $s$, $\cH(\CC)$, $\cH(\DD_1)$ are biprojective.
Moreover, $\cH(\DD_1)$ is unital and hence contractible.
On the other hand, $\cH(\DD_R)$ is not biprojective unless $R=1$ or $R=\infty$.
\end{example}

\begin{definition}
We say that $P$ satisfies (\bfM) if
there exist complex matrices $\alpha=(\alpha_{ij})_{i,j\in I}$
and $\beta=(\beta_{ij})_{i,j\in I}$ such that
\begin{compactenum}
\item[\quad (\bfM1)]
$\alpha_{ij}+\beta_{ij}=1\quad (i,j\in I)$;
\item[\quad (\bfM2)]
$\forall p\in P\quad\exists\, C>0\quad\exists\, q\in P\quad\forall j\in\N\quad
\sup_i |\alpha_{ij}|p_i p_j\le Cq_j^2$;
\item[\quad (\bfM3)]
$\forall p\in P\quad\exists\, C>0\quad\exists\, q\in P\quad\forall i\in\N\quad
\sup_j |\beta_{ij}|p_j p_i\le Cq_i^2$.
\end{compactenum}
\end{definition}

The importance of condition (\bfM) will become clear in Section~\ref{sect:dims}.
Since this condition looks a bit artificial and difficult to check, let show that
it is often satisfied automatically.

\begin{prop}
\label{prop:M_aut}
Let $P$ be a K\"othe set on $\N$, and suppose that
either each $p\in P$ is nondecreasing, or each $p\in P$
is nonincreasing. Then $P$ satisfies {\upshape (\bfM)}.
\end{prop}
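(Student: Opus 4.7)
The plan is to exploit the monotonicity of the sequences in $P$ to split the ``product'' $p_ip_j$ into two pieces, one controlled by $p_j^2$ and the other by $p_i^2$, and to use this splitting to define the matrices $\alpha$ and $\beta$ as characteristic functions of the two halves of $\N\times\N$.

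More precisely, suppose first that every $p\in P$ is nondecreasing. The key observation is then
\[
i\le j\;\Longrightarrow\; p_ip_j\le p_j^2,\qquad j<i\;\Longrightarrow\; p_ip_j\le p_i^2.
\]
Motivated by this, I would set
\[
\alpha_{ij}=
\begin{cases} 1, & i\le j,\\ 0, & i>j,\end{cases}
\qquad
\beta_{ij}=1-\alpha_{ij}=
\begin{cases} 1, & i>j,\\ 0, & i\le j.\end{cases}
\]
Condition (\bfM1) holds by construction. For (\bfM2), given $p\in P$ we have, for each $j$,
\[
\sup_i|\alpha_{ij}|p_ip_j=\sup_{i\le j}p_ip_j=p_j\cdot p_j=p_j^2,
\]
so one can take $q=p$ and $C=1$. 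Condition (\bfM3) is symmetric: for each $i$,
\[
\sup_j|\beta_{ij}|p_jp_i=\sup_{j<i}p_jp_i\le p_i\cdot p_i=p_i^2,
\]
again with $q=p$, $C=1$.

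The nonincreasing case is entirely analogous, after swapping the roles of $i$ and $j$ in the definition of $\alpha$: take $\alpha_{ij}=1$ for $i\ge j$ and $\beta_{ij}=1$ for $i<j$, and use $p_ip_j\le p_j^2$ when $i\ge j$ together with $p_ip_j\le p_i^2$ when $j>i$. In both cases no asymmetry between different members of $P$ is needed, so the constants and the seminorm $q$ may be chosen as $q=p$, $C=1$. There is no real obstacle here: the proposition is essentially a bookkeeping statement packaging the elementary inequality $p_ip_j\le\max(p_i,p_j)^2$ together with the fact that, under a global monotonicity assumption, $\max(p_i,p_j)$ is either $p_i$ or $p_j$ according to which of the two indices is larger.
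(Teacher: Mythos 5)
Your proof is correct. The core idea coincides with the paper's --- split $\N\times\N$ into the region where $p_j=\max(p_i,p_j)$ and the region where $p_i=\max(p_i,p_j)$, and bound $p_ip_j$ by $p_j^2$ on the first and by $p_i^2$ on the second --- but the realization differs. The paper works case-free: it sets $\alpha_{ij}=\min\bigl\{1,\inf_{p\in P}p_j/p_i\bigr\}$ (with the convention $a/0=+\infty$), observes that (\bfM2) then holds with $q=p$ and $C=1$ for an \emph{arbitrary} K\"othe set, and invokes the monotonicity hypothesis only once, to obtain $p_i/p_j+q_j/q_i\ge 1$ for all $p,q\in P$ and hence $\beta_{ij}=1-\alpha_{ij}\le p_i/p_j$, which yields (\bfM3). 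Your $0$--$1$ matrices are the crude version of that $\alpha$: in the nondecreasing case they agree with the paper's on $\{i\le j\}$ and truncate it to $0$ on $\{i>j\}$. What the paper's choice buys is uniformity (one formula covering both cases, with the (\bfM2) half requiring no hypothesis on $P$ at all); what yours buys is transparency, since the elementary inequality $p_ip_j\le\max(p_i,p_j)^2$ sits on the surface. The only point deserving a word in your write-up is the empty supremum (e.g.\ $\sup_{j<i}$ when $i=1$ in your verification of (\bfM3)), which is harmlessly $0$. Both arguments are complete.
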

\begin{proof}
For convenience, let us agree that $a/0=+\infty$ and
$a+(+\infty)=+\infty$ for all $a\ge 0$.
The advantage of this convention is that
\begin{equation}
\label{infty}
a\ge bc \iff a/b\ge c \quad\text{for all $a,b,c\ge 0$}.
\end{equation}
For each $i,j\in \N$ set
\[
\alpha_{ij}=\min\left\{ 1,\; \inf_{p\in P} \frac{p_j}{p_i}\right\}.
\]
By \eqref{infty}, we have
\[
\alpha_{ij} p_i p_j\le p_j^2\qquad (p\in P,\; i,j\in \N),
\]
and so $\alpha=(\alpha_{ij})_{i,j\in \N}$ satisfies (\bfM2).
Now observe that the condition imposed on $P$ implies the inequality
\[
\frac{p_i}{p_j}+\frac{q_j}{q_i}\ge 1\qquad (p,q\in P,\; i,j\in \N),
\]
whence
\begin{equation}
\label{alpha}
\frac{p_i}{p_j}+\alpha_{ij}\ge 1 \qquad (p\in P,\; i,j\in \N).
\end{equation}
Setting $\beta_{ij}=1-\alpha_{ij}$ and using \eqref{alpha} and \eqref{infty}, we see that
\[
\beta_{ij} p_i p_j\le p_i^2\qquad (p\in P,\; i,j\in \N),
\]
and so $\beta=(\beta_{ij})_{i,j\in \N}$ satisfies (\bfM3).
Clearly, (\bfM1) is also satisfied.
\end{proof}

\begin{example}
\label{example:M}
The K\"othe sets from Examples \ref{example:l1}--\ref{example:pow_ser}
satisfy the conditions of Proposition~\ref{prop:M_aut},
and hence they satisfy condition (\bfM).
Strictly speaking, this is not the case for $\Lambda_R(\alpha)$ if $R>1$,
but in this case the K\"othe set
$P=\{ (r^{\alpha_k})_{k\in\N} : 0<r<R\}$ is equivalent to the K\"othe set
$P'=\{ (r^{\alpha_k})_{k\in\N} : 1<r<R\}$, which satisfies the
conditions of Proposition~\ref{prop:M_aut}.
\end{example}

\begin{remark}
Condition (\bfM) slightly differs from its original version
in \cite[4.7]{Pir_bipr} (see also \cite[7.2]{Pir_msb}, \cite[Section 3]{Pir_QJM}).
Namely, in \cite{Pir_bipr,Pir_msb,Pir_QJM} the right-hand side of
(\bfM2) (respectively, (\bfM3)) was $Cq_j$ (respectively, $Cq_i$).
We have made this change in order to prove Proposition~\ref{prop:M_aut}.
This will not affect our main results (Theorems~\ref{thm:wdgwdb}
and \ref{thm:dgdb}), because for K\"othe sets satisfying (\bfB) the old
form of (\bfM) is clearly equivalent to the new one.
\end{remark}

\section{Homological dimensions of K\"othe algebras}
\label{sect:dims}
In this section we compute the dimensions $\dg$, $\db$, $\wdg$, and $\wdb$ of K\"othe
algebras in terms of conditions (\bfU), (\bfN), (\bfB), (\bfM) introduced above.
Before formulating our results, let us observe that, if $\lambda(P)$ is a K\"othe algebra
(i.e., if $P\prec P^2$),
then $\lambda^\infty(P)$ is also a Fr\'echet algebra under
pointwise multiplication (see the proof of Proposition~\ref{prop:Kothe_alg},
$\mathrm{(iii)}\Longrightarrow\mathrm{(ii)}$).
Since $\lambda(P)$ in continuously embedded into $\lambda^\infty(P)$,
we may consider $\lambda^\infty(P)$ as a left Fr\'ehcet $\lambda(P)$-module.

Given a Fr\'echet algebra $A$, we consider $\CC$ as a left Fr\'echet $A$-module by letting
$A$ act on $\CC$ trivially. In other words, $\CC=A_+/A$.

\begin{theorem}[{\cite[4.3]{Pir_QJM}}]
\label{thm:wdgwdb}
Let $A=\lambda(P)$ be a K\"othe algebra. Then
\begin{equation*}
\wdg A=\wdb A=
\begin{cases}
0, & \text{$P$ satisfies {\upshape (\bfU)}}.\\
1, & \parbox[t]{85mm}{$P$ satisfies {\upshape (\bfB)} and {\upshape (\bfN)},
but does not satisfy {\upshape (\bfU)}. In this case, $\wdh_A\CC=1.$}\\
2, & \parbox[t]{85mm}{$P$ satisfies {\upshape (\bfB)},
but does not satisfy {\upshape (\bfN)}. In this case, $\wdh_A\lambda^\infty(P)=2$.}\\
\infty, & \parbox[t]{85mm}{$P$ does not satisfy {\upshape (\bfB)}. In this case,
$\wdh_A\CC=\infty$.}
\end{cases}
\end{equation*}
\end{theorem}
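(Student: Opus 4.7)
The plan is a case analysis according to which of (\bfU), (\bfB), (\bfN) hold, combined with the inequality $\wdg A\le\wdb A$ recalled in Section~4: it suffices to bound $\wdb A$ from above and to exhibit some left Fr\'echet $A$-module whose weak dimension realises the claimed value from below, since that lower bound propagates to $\wdb A$ via the same inequality.

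Whenever $P$ satisfies (\bfB), Theorem~\ref{thm:kothe_bipr} gives biflatness of $A$, and part~(ii) of the Helemskii theorem from Section~4 yields $\wdg A\le 2$. I would push this to $\wdb A\le 2$ by verifying one-sided projectivity of $A$: the bimodule section of the product $m\colon A\Ptens A\to A$ supplied by biprojectivity realises $A$ as a direct summand of the free left module $A\Ptens A$, so that part~(iii) of the Helemskii theorem applies. This caps both dimensions at $2$ whenever (\bfB) holds.

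For the easy cases, first suppose (\bfU). For each $p\in P$, summability forces $\sup_i p_i<\infty$, hence $p_i^{\,2}\le(\sup_j p_j)\,p_i$, so $P^{\,2}\prec P$; together with $P\prec P^{\,2}$ from the K\"othe-algebra hypothesis this gives (\bfB). Hence $A$ is biprojective and unital, so contractible by the characterisation recalled in Section~4, and $\db A=0$ forces $\wdb A=\wdg A=0$. At the opposite end, if (\bfB) fails I aim to prove $\wdh_A\CC=\infty$ by contradiction: a finite flat resolution of $\CC$ would, via a K\"unneth-type computation of $\Tor^A_\ast(\CC,\CC)$, force the multiplication $A\Ptens A\to A$ to behave surjectively enough to reconstruct $P\sim P^{\,2}$. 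Iterating the obstruction degree by degree would then show the dimension must be infinite.

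For the intermediate cases, suppose (\bfB) holds but (\bfU) fails. If additionally (\bfN) holds, the non-unitality produces an admissible sequence $0\to A\to A_+\to\CC\to 0$ in $A\lmod$ that does not split, forcing $\wdh_A\CC\ge 1$. For the matching upper bound I would promote this sequence to a length-one flat resolution of $\CC$: biflatness makes $A$ flat on the left, and nuclearity combined with the Grothendieck--Pietsch identification $\lambda(P)=\lambda^\infty(P)$ should make $A_+$ flat as well, pinning $\wdh_A\CC=1$; a bimodule version of the same construction, powered by biprojectivity, delivers $\wdb A=\wdg A=1$. If instead (\bfN) fails, I turn to $\lambda^\infty(P)$, which properly contains $\lambda(P)$ by the Grothendieck--Pietsch criterion. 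Starting a putative flat resolution of $\lambda^\infty(P)$, I would show the first syzygy cannot be flat by detecting a non-vanishing $\Tor_2$ against a right module crafted from the sequences that witness $\lambda^\infty(P)\setminus\lambda(P)$; this forces $\wdh_A\lambda^\infty(P)\ge 2$, matching the biflat upper bound.

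The hardest step is the non-nuclear subcase. Ruling out a flat resolution of $\lambda^\infty(P)$ of length $\le 1$ is genuinely more delicate than the analogous argument for $\CC$, because the natural projective and flat resolutions built from the K\"othe structure live on $\lambda(P)$ rather than on $\lambda^\infty(P)$; bridging the two modules to extract a nonzero $\Tor_2$ requires an explicit construction of test sequences tailored to the failure of (\bfN), rather than any purely categorical manipulation.
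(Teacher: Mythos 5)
Your overall architecture --- a case analysis on ({\upshape\bfU}), ({\upshape\bfB}), ({\upshape\bfN}), upper bounds from biflatness via the Helemskii theorem of Section~4, and lower bounds realized on the specific modules $\CC$ and $\lambda^\infty(P)$ --- is the right one and matches the shape of the cited proof (the survey itself only refers to \cite[4.3]{Pir_QJM}), and your ({\upshape\bfU}) case is complete and correct. But the remaining cases contain genuine gaps. First, in the ``$1$'' case you conflate projectivity with flatness: the non-splitting of $0\to A\to A_+\to\CC\to 0$ in $A\lmod$ shows $\dh_A\CC\ge 1$, not $\wdh_A\CC\ge 1$. To rule out flatness of $\CC$ you need a right module $X$ with $\Tor_1^A(X,\CC)\ne 0$ (or a non-Hausdorff $\Tor_0$), or the approximate-identity criterion for flatness of $A_+/A$; note that the most naive test fails, since $A^2$ is dense in every K\"othe algebra and hence $A\ptens{A}\CC=0$. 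Second, and more seriously, the matching upper bound $\wdb A\le 1$ is the technical heart of the theorem, and ``a bimodule version of the same construction'' does not produce it: from the admissible bimodule sequence $0\to A\to A_+\to\CC\to 0$ and biflatness of $A$ one gets $\wdb A\le \wdh_{A-A}\CC$, and the general K\"unneth-type bound only gives $\wdh_{A-A}\CC\le 2$ (the sum of the two one-sided weak dimensions of $\CC$) --- i.e.\ exactly the estimate you already had. Nuclearity must be used to kill the resulting $\Tor_2$, and this is precisely where the identification $\lambda(P)=\lambda^\infty(P)$ does real analytic work; nothing in your sketch engages with it.

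The other two cases are asserted rather than proved. For ({\upshape\bfB}) without ({\upshape\bfN}) you promise ``test sequences tailored to the failure of ({\upshape\bfN})'' to produce a nonzero $\Tor_2$ against $\lambda^\infty(P)$, but give no construction; and in the non-({\upshape\bfB}) case, ``iterating the obstruction degree by degree'' is not a mechanism --- you must explain why the obstruction survives in \emph{every} degree rather than dying at degree $3$, since the assertion $\wdh_A\CC=\infty$ (as opposed to $\ge 2$) is the whole content of that row. Finally, a small repairable slip: $A\Ptens A$ is not a free left module; the correct statement is that the bimodule section $\rho\colon A\to A\Ptens A\hookrightarrow A_+\Ptens A$ of the product map exhibits $A$ as a retract of the free left module $A_+\Ptens A$, whence $A$ is projective in $A\lmod$ and part~(iii) of the Helemskii theorem yields $\wdb A\le 2$ under ({\upshape\bfB}). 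That fix is easy; the gaps above are exactly where the proof in \cite{Pir_QJM} has to work hardest.
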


Let us note that Theorem~\ref{thm:wdgwdb} involves only conditions
(\bfU), (\bfB), and (\bfN), but not (\bfM).
The situation with the ``strong'' dimensions $\dg$ and $\db$ is slightly
more delicate. To compute them, we need some extra notation.

Let $P$ be a K\"othe set on $I$. For each $p\in P$ we define a function
$\bar p\colon I\to\R_+$
by $\bar p_i=\min\{ p_i,1\}$. Clearly, $\bar P=\{ \bar p : p\in P\}$ is a K\"othe set.
It is easy to show (see \cite[3.2]{Pir_QJM}) that if $\lambda(P)$ is a K\"othe
algebra, then the pointwise product $a\cdot x$ of any $a\in\lambda(P)$
and $x\in\lambda(\bar P)$ is in $\lambda(\bar P)$, and that
$\lambda(\bar P)$ becomes a Fr\'echet $\lambda(P)$-module under this operation.

\begin{theorem}[{\cite[4.3]{Pir_QJM}}]
\label{thm:dgdb}
Let $A=\lambda(P)$ be a K\"othe algebra. Then
\begin{equation*}
\dg A=\db A=
\begin{cases}
0, & \text{$P$ satisfies {\upshape (\bfU)}}.\\
1, & \parbox[t]{85mm}{$P$ satisfies {\upshape (\bfB)}, {\upshape (\bfN)},
and {\upshape (\bfM)}, but does not satisfy {\upshape (\bfU)}. In this case, $\dh_A\CC=1.$}\\
2, & \parbox[t]{85mm}{$P$ satisfies {\upshape (\bfB)} and {\upshape (\bfN)},
but does not satisfy {\upshape (\bfM)}. In this case, $\dh_A\lambda(\bar P)=2$.}\\
2, & \parbox[t]{85mm}{$P$ satisfies {\upshape (\bfB)},
but does not satisfy {\upshape (\bfN)}. In this case, $\dh_A\lambda^\infty(P)=2$.}\\
\infty, & \parbox[t]{85mm}{$P$ does not satisfy {\upshape (\bfB)}. In this case,
$\dh_A\CC=\infty$.}
\end{cases}
\end{equation*}
\end{theorem}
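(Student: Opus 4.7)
The plan is to treat all five clauses together, case by case, pairing short projective resolutions (upper bounds) with lower bounds drawn from Theorem~\ref{thm:wdgwdb} via the inequality $\wdh_A X\le\dh_A X$.

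First dispose of the two extreme clauses. If (\bfU) holds, then $\mathbf 1=(1,1,\ldots)$ lies in $\lambda(P)$ and is an identity, and $p_i^2\le(\sup_j p_j)\,p_i$ with $\sup_j p_j<\infty$ yields $P^2\prec P$; combined with the standing hypothesis $P\prec P^2$, this gives (\bfB). By Theorem~\ref{thm:kothe_bipr}, $A$ is biprojective, and a unital biprojective Fr\'echet algebra is contractible, so $\db A=\dg A=0$. If instead (\bfB) fails, then $\wdh_A\CC=\infty$ by Theorem~\ref{thm:wdgwdb}, whence $\dh_A\CC=\infty$ and $\dg A=\db A=\infty$.

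In the remaining cases (\bfB) holds, so $A$ is biprojective by Theorem~\ref{thm:kothe_bipr}, and part~(i) of the theorem at the end of Section~4 gives $\db A\le 2$. If in addition (\bfN) fails, Theorem~\ref{thm:wdgwdb} yields $\wdh_A\lambda^\infty(P)=2$, and the upper bound forces $\dh_A\lambda^\infty(P)=\dg A=\db A=2$. Suppose now (\bfN) holds and $\neg$(\bfU). Then $\wdh_A\CC=1$ by Theorem~\ref{thm:wdgwdb}, providing the lower bound $\dh_A\CC\ge 1$. The dichotomy (\bfM)/$\neg$(\bfM) now enters. With (\bfM), one uses the decomposition $\delta_{ij}=\alpha_{ij}+\beta_{ij}$ from the definition, whose $\alpha$-part is controlled in the $j$-direction by~(\bfM2) and whose $\beta$-part is controlled in the $i$-direction by~(\bfM3), to construct an explicit $A$-bimodule section of the multiplication map $A_+\Ptens A_+\to A_+$; this produces a length-one projective bimodule resolution of $A_+$ and gives $\db A\le 1$, hence $\dg A=\db A=1$. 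Without (\bfM), one shows $\dh_A\lambda(\bar P)\ge 2$ by exhibiting a nonzero class in $\Ext^2_A(\lambda(\bar P),X)$ for a suitable test module $X$, whose non-vanishing is equivalent to the failure of (\bfM); combined with the universal upper bound $\le 2$, this gives $\dg A=\db A=2$.

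The main obstacle is the construction in the (\bfM) subcase: assembling the bimodule section of the multiplication from the matrices $(\alpha_{ij})$ and $(\beta_{ij})$ and checking its continuity with respect to the projective tensor seminorms on $A_+\Ptens A_+$ is precisely what conditions (\bfM2) and (\bfM3) were designed to enable, and the seminorm bookkeeping is delicate. A secondary difficulty is the Ext computation in the $\neg$(\bfM) subcase, where one must exhibit a non-trivial extension whose obstruction to splitting is exactly condition (\bfM).
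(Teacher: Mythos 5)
The paper itself contains no proof of this theorem; it is quoted from \cite[4.3]{Pir_QJM}, so your attempt can only be judged on its own terms. Your overall framing is the right one, and the three ``easy'' clauses are correctly disposed of: under (\bfU) you correctly deduce (\bfB), hence biprojectivity plus unitality, hence contractibility and $\db A=\dg A=0$; when (\bfB) fails, $\wdh_A\CC=\infty$ from Theorem~\ref{thm:wdgwdb} together with $\wdh\le\dh$ and $\dg\le\db$ settles the last clause; and when (\bfB) holds but (\bfN) fails, the sandwich $2=\wdh_A\lambda^\infty(P)\le\dh_A\lambda^\infty(P)\le\dg A\le\db A\le 2$ works, the upper bound coming from biprojectivity via part (i) of the theorem at the end of Section~4.

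The two clauses that actually distinguish $\dg,\db$ from $\wdg,\wdb$ are where the proposal has genuine gaps. In the (\bfM) subcase you propose to construct ``an explicit $A$-bimodule section of the multiplication map $A_+\Ptens A_+\to A_+$'' and claim this ``produces a length-one projective bimodule resolution.'' These two statements are incompatible: a bimodule morphism splitting $\pi_+\colon A_+\Ptens A_+\to A_+$ would exhibit $A_+$ as a bimodule retract of the projective bimodule $A_+\Ptens A_+$, i.e.\ it would prove $\db A=0$ and hence contractibility, contradicting the lower bound $\dh_A\CC\ge 1$ you have just established (and Corollary~\ref{cor:unital}). What (\bfM) must actually be used for is to prove that the \emph{kernel} of $\pi_+$ --- the first syzygy in the standard projective bimodule resolution --- is projective; the decomposition $1=\alpha_{ij}+\beta_{ij}$ splits that kernel into a piece controlled in the $j$-variable by (\bfM2) and a piece controlled in the $i$-variable by (\bfM3), each a retract of a projective bimodule. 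As written, your step either proves too much or proves nothing. In the $\neg$(\bfM) subcase the entire lower bound $\dh_A\lambda(\bar P)\ge 2$ is asserted rather than proved: you name no test module $X$, no presentation of $\lambda(\bar P)$, and no argument that the relevant class in $\Ext^2_A(\lambda(\bar P),X)$ is nonzero precisely when (\bfM) fails. Since this implication --- that failure of (\bfM) forces the strong dimensions strictly above the weak ones, witnessed by the specific module $\lambda(\bar P)$ --- is the main content of the theorem beyond Theorem~\ref{thm:wdgwdb}, the proposal cannot be accepted as a proof without these two constructions being carried out.
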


\begin{corollary}
\label{cor:unital}
For a K\"othe algebra $\lambda(P)$, the following conditions are equivalent:
\begin{compactenum}
\item[{\upshape (i)}] $\lambda(P)$ is amenable;
\item[{\upshape (ii)}] $\lambda(P)$ is contractible;
\item[{\upshape (iii)}] $\lambda(P)$ is unital.
\end{compactenum}
\end{corollary}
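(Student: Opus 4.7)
The plan is to read the corollary off from Theorems~\ref{thm:wdgwdb} and~\ref{thm:dgdb}, invoking three elementary dictionaries that are already available. First, by definition, $A$ is contractible iff $\db A=0$ and $A$ is amenable iff $\wdb A=0$. Second, as was observed immediately after the definition of condition (\bfU), $\lambda(P)$ is unital iff $P$ satisfies (\bfU). Thus the corollary is simply the assertion that, for a K\"othe algebra, $\db A=0$, $\wdb A=0$, and (\bfU) are equivalent conditions.

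With these dictionaries in hand, I would close the loop (iii)$\Rightarrow$(ii)$\Rightarrow$(i)$\Rightarrow$(iii). The implication (iii)$\Rightarrow$(ii) is the first line of Theorem~\ref{thm:dgdb}: if $P$ satisfies (\bfU), then $\db A=0$, so $A$ is contractible. The implication (ii)$\Rightarrow$(i) is automatic, since every projective Fr\'echet bimodule is flat; equivalently, $\db A=0$ forces $\wdb A=0$. The remaining implication (i)$\Rightarrow$(iii) comes from inspecting Theorem~\ref{thm:wdgwdb}: if $\wdb A=0$, then $P$ must fall into the first row of the table there, since in all other cases the listed value of $\wdb A$ is $1$, $2$, or $\infty$. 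Hence (\bfU) holds, and $\lambda(P)$ is unital.

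Because the genuine content has already been absorbed into Theorems~\ref{thm:wdgwdb} and~\ref{thm:dgdb}, there is no substantive obstacle: once those tables are available, the corollary is a bookkeeping exercise. The only point worth flagging explicitly is that the implication (i)$\Rightarrow$(iii) uses the full strength of Theorem~\ref{thm:wdgwdb}, specifically the fact that \emph{every} case other than (\bfU) forces $\wdb A\ge 1$; this is what rules out the possibility of a non-unital amenable K\"othe algebra (for instance, an amenable biflat one as in the second row, for which $\wdb A=1$, not $0$).
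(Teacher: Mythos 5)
Your proposal is correct and follows exactly the route the paper intends: the corollary is stated without proof precisely because it is the bookkeeping you describe, reading the first rows of Theorems~\ref{thm:wdgwdb} and~\ref{thm:dgdb} together with the dictionaries contractible $\iff\db A=0$, amenable $\iff\wdb A=0$, unital $\iff$ (\bfU), and noting that all non-(\bfU) rows force $\wdb A\ge 1$. Nothing is missing.
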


\begin{corollary}
Condition {\upshape (\bfU)} implies conditions {\upshape (\bfB), (\bfN), (\bfM)}.
\end{corollary}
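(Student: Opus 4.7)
The plan is to verify each of (\bfB), (\bfN), and (\bfM) separately, using only (\bfU) together with the standing K\"othe-algebra hypothesis $P\prec P^2$. The workhorse observation is that (\bfU) forces $M_p:=\sum_i p_i<\infty$ for every $p\in P$, hence in particular $\sup_i p_i\le M_p$.

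For (\bfB) it suffices to verify $P^2\prec P$, since $P\prec P^2$ is part of the definition of a K\"othe algebra. Given $p\in P$, the bound $p_i\le M_p$ yields $p_i^2\le M_p p_i$ for all $i$, i.e.\ $p^2\le M_p p$, which is exactly $P^2\prec P$.

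For (\bfN) I would fix $p\in P$ and use $P\prec P^2$ to pick $q\in P$ and $C>0$ with $p\le Cq^2$. Then $\alpha_i:=Cq_i$ lies in $\ell^1(I)$ by (\bfU), and $p_i\le \alpha_i q_i$, which is Definition~\ref{def:N}. For (\bfM) I would try the constant choice $\alpha_{ij}=\beta_{ij}=1/2$, so that (\bfM1) holds trivially. Keeping the same $q$ as above, the estimate
\[
\sup_i |\alpha_{ij}|\,p_i p_j \;=\; \tfrac{1}{2}\,p_j\sup_i p_i \;\le\; \tfrac{M_p}{2}\,p_j \;\le\; \tfrac{M_p C}{2}\,q_j^2
\]
gives (\bfM2), and the symmetric bound $\sup_j|\beta_{ij}|p_ip_j\le\tfrac{M_pC}{2}q_i^2$ gives (\bfM3).

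I do not anticipate any real obstacle: every step is a one-line inequality. The only point worth flagging is that the K\"othe-algebra hypothesis $P\prec P^2$ is used essentially in both (\bfN) and (\bfM); without it, summability of $p$ alone would not let us dominate $p$ by a multiple of $q^2$. For the same reason I would argue directly from (\bfU) rather than routing the proof through the homological characterization of contractibility in Corollary~\ref{cor:unital}.
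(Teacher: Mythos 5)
Your argument is correct, but it takes a genuinely different route from the paper's. In the paper this statement appears immediately after Theorem~\ref{thm:dgdb} and Corollary~\ref{cor:unital} and is meant to be read off from them: condition (\bfU) gives $\db\lambda(P)=0$, and then the case division of Theorem~\ref{thm:dgdb} forces each of (\bfB), (\bfN), (\bfM) to hold, since their failure would give $\db\lambda(P)=\infty$ or $2$. You instead verify the three conditions directly from $\sum_i p_i<\infty$ together with the standing hypothesis $P\prec P^2$, and every step checks out: $p_i\le M_p$ gives $p^2\le M_p p$, hence $P^2\prec P$ (this suffices for (\bfB) because the paper identifies $P\cdot P$ with $\{p^2:p\in P\}$ up to equivalence); $\alpha_i=Cq_i$ is summable by (\bfU) and witnesses (\bfN); and the constant matrices $\alpha_{ij}=\beta_{ij}=1/2$ combined with $\sup_i p_i\le M_p$ and $p\le Cq^2$ witness (\bfM). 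Your proof is elementary and self-contained, which is a real gain: it shows the implication is a triviality about K\"othe sets rather than a consequence of the homological machinery, and it produces explicit witnesses. What the paper's route buys is economy -- zero additional work once Theorem~\ref{thm:dgdb} is in place -- and it situates the corollary as a sanity check on the consistency of the case analysis in that theorem. Your closing remark that $P\prec P^2$ is essential for (\bfN) and (\bfM) is well taken; (\bfU) alone, for a K\"othe set that is not an algebra, would not suffice.
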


Let us look at some examples. We will see, in particular, that every combination
of (\bfU), (\bfN), (\bfB), (\bfM) mentioned in Theorems~\ref{thm:wdgwdb}
and \ref{thm:dgdb} is possible.

\begin{example}
The algebra $\CC^I$ satisfies {\upshape (\bfU)}, and so
\[
\dg\CC^I=\db\CC^I=\wdg\CC^I=\wdb\CC^I=0
\]
(see Example \ref{example:CC^I_contr} or Corollary \ref{cor:unital}).
\end{example}

\begin{example}
The algebra $\ell^1$ satisfies {\upshape (\bfB)},
but does not satisfy {\upshape (\bfN)}. Therefore Theorems \ref{thm:wdgwdb} and \ref{thm:dgdb}
imply that
\[
\dg\ell^1=\db\ell^1=\wdg\ell^1=\wdb\ell^1=\dh\nolimits_{\ell^1}\ell^\infty
=\wdh\nolimits_{\ell^1}\ell^\infty=2.
\]
For $\dg$, $\db$, and $\dh_{\ell^1}\ell^\infty$, this is an old
result by Helemskii \cite{X_ne1} (see also \cite[V.2.16]{X1});
for $\wdb$, the result is due to Yu.~V.~Selivanov \cite{Sel_bifl}.
\end{example}

\begin{example}
The algebra $\Lambda_\infty(\alpha)$ does not satisfy (\bfU)
(see Example~\ref{example:Lambda_notuni}),
but satisfies (\bfB) and (\bfM)
(see Examples~\ref{example:Lambda_R_bipr} and~\ref{example:M}).
Taking into account Example~\ref{example:Lambda_R_nucl}
and Theorems~\ref{thm:wdgwdb}
and~\ref{thm:dgdb}, we get
\[
\dg\Lambda_\infty(\alpha)=\db\Lambda_\infty(\alpha)
=\wdg\Lambda_\infty(\alpha)=\wdb\Lambda_\infty(\alpha)=
\begin{cases}
1, &  \sup_n (\log n)/\alpha_n<\infty,\\
2 & \text{otherwise.}
\end{cases}
\]
In particular,
\begin{align*}
\dg s&=\db s=\wdg s=\wdb s=1,\\
\dg \cH(\CC)&=\db \cH(\CC)=\wdg \cH(\CC)=\wdb \cH(\CC)=1.
\end{align*}
\end{example}

\begin{example}
The algebra $\Lambda_1(\alpha)$ satisfies {\upshape (\bfB)}
(see Example~\ref{example:Lambda_R_bipr}). By using the nuclearity criterion
for $\Lambda_1(\alpha)$ (see Example~\ref{example:Lambda_R_nucl}),
it is easy to show that if $\Lambda_1(\alpha)$ satisfies
{\upshape (\bfN)}, then it satisfies (\bfU).
Taking into account Theorems~\ref{thm:wdgwdb}
and~\ref{thm:dgdb}, we see that
\[
\dg\Lambda_1(\alpha)=\db\Lambda_1(\alpha)
=\wdg\Lambda_1(\alpha)=\wdb\Lambda_1(\alpha)=
\begin{cases}
0, &  \lim_n (\log n)/\alpha_n<\infty,\\
2 & \text{otherwise.}
\end{cases}
\]
In particular,
\[
\dg \cH(\DD_1)=\db \cH(\DD_1)=\wdg \cH(\DD_1)=\wdb \cH(\DD_1)=0.
\]
\end{example}

\begin{example}
If $1<R<+\infty$, then $\Lambda_R(\alpha)$ does not satisfy (\bfB)
(see Example~\ref{example:Lambda_R_bipr}). Therefore Theorems~\ref{thm:wdgwdb}
and~\ref{thm:dgdb} imply that
\[
\dg\Lambda_R(\alpha)=\db\Lambda_R(\alpha)
=\wdg\Lambda_R(\alpha)=\wdb\Lambda_R(\alpha)=\infty.
\]
In particular,
\[
\dg \cH(\DD_R)=\db \cH(\DD_R)=\wdg \cH(\DD_R)=\wdb \cH(\DD_R)=\infty.
\]
\end{example}

\begin{example}
Let $I=\N\times\N$. For each $i,j,k\in\N$ we define
\[
p_{ij}^{(k)}=
\begin{cases}
2^{(kj)^i} (i+j)^k & (i\le k),\\
(i+j)^k & (i>k).
\end{cases}
\]
Set $p^{(k)}=(p_{ij}^{(k)})_{i,j\in\N}$, and consider the K\"othe set
$P=\{ p^{(k)}\}_{k\in\N}$. As was shown in \cite[Theorem 7.9]{Pir_msb},
$P$ satisfies {\upshape (\bfB)} and {\upshape (\bfN)},
but does not satisfy {\upshape (\bfM)}.
Note that, since $p_{ij}^{(k)}\ge 1$ for all $i,j,k$, we have $\lambda(\bar P)=\ell^1$.
Now Theorems~\ref{thm:wdgwdb}
and~\ref{thm:dgdb} imply that
\begin{align*}
\dg\lambda(P)&=\db\lambda(P)=\dh\nolimits_{\lambda(P)}\ell^1=2, \quad\text{while}\\
\wdg\lambda(P)&=\wdb\lambda(P)=\wdh\nolimits_{\lambda(P)}\CC=1.
\end{align*}
\end{example}

\section{Approximate contractibility of K\"othe algebras and around}
Approximately contractible and approximately amenable Banach algebras were introduced
by F.~Ghahramani and R.~J.~Loy \cite{GL_genamen} and have attracted much attention
in recent years \cite{DLZ,GLZ_genamen2,GS,CGZ,CG}. Similar notions for Fr\'echet algebras were
studied by P.~Lawson and C.~J.~Read~\cite{LR}. By definition, a Fr\'echet algebra
$A$ is {\em approximately contractible} (respectively, {\em approximately amenable})
if for each Fr\'echet $A$-bimodule $X$ every continuous derivation from
$A$ to $X$ (respectively, to the strong dual $X^*$) is the limit of a pointwise
convergent net of inner derivations. In \cite{GLZ_genamen2}, it was shown that
approximate contractibility and approximate amenability for Banach algebras
are equivalent. A similar result for locally $m$-convex Fr\'echet algebras
was proved in~\cite{LR}. We refer the reader to Y.~Zhang's survey~\cite{Zhang_slides}
for a detailed discussion and numerous references concerning
approximate amenability for Banach algebras.

In \cite{LR}, Lawson and Read gave some sufficient conditions of approximate
contractibility for locally $m$-convex K\"othe algebras. In particular, they
showed that the algebra $s$ of rapidly decreasing sequences is approximately
contractible. The following result generalizes Proposition 3.9 from \cite{LR}.

\begin{theorem}
\label{thm:BN_appr}
Let $A=\lambda(P)$ be a K\"othe algebra satisfying {\upshape (\bfB)} and
{\upshape (\bfN)}. Then $A$ is approximately contractible.
\end{theorem}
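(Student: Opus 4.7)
My plan is to exploit the biprojective splitting coming from (\bfB) together with the canonical idempotent approximate identity in $\lambda(P)$, and to invoke nuclearity (\bfN) at the final, module-theoretic step.

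By (\bfB) and Theorem~\ref{thm:kothe_bipr}, $A=\lambda(P)$ is biprojective, so the product $m\colon A\Ptens A\to A$ admits a bimodule right inverse $\rho\colon A\to A\Ptens A$ with $m\circ\rho=\id_A$. The characteristic sequences $e_n:=\mathbf{1}_{\{1,\dots,n\}}\in A$ are idempotent and form an approximate identity in any K\"othe algebra. Commutativity of $A$ combined with the bimodule property of $\rho$ yields $a\cdot\rho(b)=\rho(ab)=\rho(ba)=\rho(b)\cdot a$, so $u_n:=\rho(e_n)$ satisfies $m(u_n)=e_n$ together with $a\cdot u_n=u_n\cdot a$ for every $a\in A$.

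Given a continuous derivation $D\colon A\to X$ into a Fr\'echet $A$-bimodule, introduce continuous linear maps $\phi,\psi\colon A\Ptens A\to X$ by $\phi(a\otimes b)=D(a)\cdot b$ and $\psi(a\otimes b)=a\cdot D(b)$, and set $\xi_n:=\psi(u_n)$. The Leibniz rule gives $\psi(v\cdot a)=\psi(v)\cdot a+m(v)\cdot D(a)$, which combined with centrality of $u_n$ and the left-module-map identity $\psi(a\cdot v)=a\cdot\psi(v)$ yields
\[
a\cdot\xi_n-\xi_n\cdot a=\psi(a\cdot u_n-u_n\cdot a)+m(u_n)\cdot D(a)=e_n\cdot D(a).
\]
Symmetrically, the choice $-\phi(u_n)$ implements the inner derivation $a\mapsto D(a)\cdot e_n$. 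The theorem is therefore reduced to showing that, for every $a\in A$, either $e_n\cdot D(a)\to D(a)$ or $D(a)\cdot e_n\to D(a)$ in $X$ (possibly after a convex combination).

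This final convergence is where condition (\bfN) is essential. By the Grothendieck--Pietsch criterion $\lambda(P)=\lambda^\infty(P)$ holds topologically, which equips $(e_n)$ with a multiplier-bounded character that allows a Cohen-type factorization argument inside the closed essential submodule $X_0:=\overline{A\cdot X\cdot A}\subset X$. Using the derivation identity $D(ae_n)=a\cdot D(e_n)+D(a)\cdot e_n$ and the continuity of $D$ with $a e_n\to a$, one checks that $D(A)\subset X_0$, and on $X_0$ the sequence $(e_n)$ acts asymptotically as the identity. I would model this step on Lawson--Read's treatment of $A=s$~\cite[Proposition~3.9]{LR}, now extended to a general nuclear biprojective K\"othe algebra. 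This is the main obstacle: the computation in the previous paragraph uses only (\bfB), so the entire role of (\bfN) is concentrated in making the idempotent approximate identity behave asymptotically as a genuine identity on the Fr\'echet bimodules of interest.
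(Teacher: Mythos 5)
Your opening moves are sound: biprojectivity does give the bimodule splitting $\rho$, centrality of $u_n=\rho(e_n)$ follows from commutativity, and the commutator identity $a\cdot\xi_n-\xi_n\cdot a=e_n\cdot D(a)$ is computed correctly. But this reduces the theorem to the assertion that $e_n\cdot D(a)\to D(a)$ (or its right-handed or averaged variants) for every continuous derivation into every Fr\'echet bimodule, and that assertion is false. Take $A=s$, let $X_1$ be $A$ with zero left action and pointwise multiplication as right action, let $X_2$ be its mirror image, and set $X=X_1\oplus X_2$ and $D(a)=(ca,c'a)$ for fixed nonzero $c,c'\in A$; this is a continuous derivation (indeed an inner one), yet $e_n\cdot D(a)\to(0,c'a)$, $D(a)\cdot e_n\to(ca,0)$, and no convex combination of the two converges to $D(a)$. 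The same example shows that your claim $D(A)\subset X_0:=\overline{A\cdot X\cdot A}$ fails: here $A\cdot X\cdot A=0$ while $D\ne0$ (the identity $D(ae_n)=a\cdot D(e_n)+D(a)\cdot e_n$ only places $D(a)$ in $\overline{A\cdot X+X\cdot A}$). The proposed repair cannot work either: Cohen factorization requires a bounded approximate identity, and a non-unital nuclear K\"othe algebra never has one (bounded sets are relatively compact in a nuclear Fr\'echet space, so a bounded approximate identity would converge along a subnet to an honest identity, contradicting the failure of (\bfU)). Even the weaker claim that $(e_n)$ acts asymptotically as the identity on $X_0$ is out of reach: you get pointwise convergence on the dense subset $A\cdot X\cdot A$, but passing to the closure needs equicontinuity of the maps $x\mapsto e_n\cdot x$, and you have no way to establish it because $(e_n)$ is unbounded in $A$.

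The actual proof is quantitative in exactly the place you leave qualitative. It verifies the hypotheses of the Lawson--Read criterion, whose decisive condition is not that $(u_\lambda)$ be an approximate identity but that $\|a-au_\lambda\|_p\,\|u_\lambda\|_p\to0$; this product estimate is precisely what compensates, on non-essential bimodules, for the unboundedness of the approximate identity. (The elements $d_\lambda=\sum_{i\in J}e_i\otimes e_i$ commute with everything for free, so the biprojectivity splitting is not even needed there.) Establishing the estimate is where (\bfB) and (\bfN) really enter: one passes to the seminorms $\|\cdot\|_p^\infty$ via nuclearity, splits $I$ into $\{p_i>1\}$ and $\{p_i\le1\}$, uses the $\ell^1$ weight from (\bfN) to make the truncations $\{i:p_i>1,\ q_i\le n\}$ finite, and uses $P\sim P^2$ to force $|a_i|q_i^2\to0$. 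Your argument uses (\bfB) only through the existence of $\rho$ and never engages with an estimate of this type; without one, the proof cannot be completed.
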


Before proving Theorem~\ref{thm:BN_appr}, let us recall a result from \cite{LR}.

\begin{lemma}[{\cite[Lemma 2.7]{LR}}]
\label{lemma:LR}
Let $A$ be a Fr\'echet algebra, and let $\{ \|\cdot\|_p : p\in P\}$ be a family
of seminorms generating the topology of $A$. Suppose that there exist nets
$(u_\lambda)_{\lambda\in\Lambda}$ in $A$ and $(d_\lambda)_{\lambda\in\Lambda}$
in $A\Ptens A$ such that
\begin{compactenum}
\item[{\upshape (i)}] $(u_\lambda)$ is a two-sided approximate identity in $A$;
\item[{\upshape (ii)}] $\| a-au_\lambda\|_p \| u_\lambda\|_p\to 0$
and $\| a-u_\lambda a\|_p \| u_\lambda\|_p\to 0$ \qquad $(a\in A)$;
\item[{\upshape (iii)}] $\pi(d_\lambda)=2u_\lambda-u_\lambda^2$, where
$\pi\colon A\Ptens A\to A$ is the product map;
\item[{\upshape (iv)}] $a\cdot d_\lambda-d_\lambda\cdot a\to 0$ \qquad $(a\in A)$.
\end{compactenum}
Then $A$ is approximately contractible.
\end{lemma}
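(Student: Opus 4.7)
The plan is to apply Lemma~\ref{lemma:LR}, using the biprojective structure furnished by (\bfB). By Theorem~\ref{thm:kothe_bipr}, (\bfB) implies $A$ is biprojective, and one checks directly that the map $\rho\colon A\to A\Ptens A$ defined by $\rho(a)=\sum_{i\in I} a_i\, e_i\otimes e_i$ (where $e_i$ denotes the indicator of $\{i\}$) is a bimodule section of the product $\pi\colon A\Ptens A\to A$. Continuity of $\rho$ follows from $P^2\prec P$: for $p,q\in P$ one has $\|\rho(a)\|_{p\otimes q}\le\sum_i |a_i|p_iq_i\le C\|a\|_r$ for some $r\in P$. Fix any enumeration $I=\{i_1,i_2,\dots\}$, let $F_N=\{i_1,\dots,i_N\}$, and put
\[
u_N=\sum_{i\in F_N} e_i,\qquad d_N=\rho(u_N)=\sum_{i\in F_N} e_i\otimes e_i.
\]

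Conditions (i), (iii), and (iv) of Lemma~\ref{lemma:LR} are essentially automatic. For (i), $\|a-au_N\|_p=\sum_{i\notin F_N}|a_i|p_i$ is a tail of a convergent series, so $(u_N)$ is a two-sided approximate identity. For (iii), $u_N$ is idempotent, hence $2u_N-u_N^2=u_N=\pi(d_N)$. For (iv), since $A$ is commutative and $\rho$ is a bimodule morphism, $a\cdot d_N=\rho(au_N)=\rho(u_Na)=d_N\cdot a$, so $a\cdot d_N-d_N\cdot a=0$ identically.

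The heart of the argument is verifying condition (ii): for every $a\in A$ and $p\in P$, $\|a-au_N\|_p\,\|u_N\|_p\to 0$. Here I would invoke the Grothendieck--Pietsch form of (\bfN): $\lambda(P)=\lambda^\infty(P)$, so $\sup_i|a_i|q_i<\infty$ for each $q\in P$; and for each $p\in P$ there exists $q\in P$ with $p/q\in\ell^1(I)$. This gives
\[
\|a-au_N\|_p\le\bigl(\sup_i |a_i|q_i\bigr)\sum_{i\notin F_N}p_i/q_i,
\]
and the estimate reduces to showing that $q$ can be chosen so that $\bigl(\sum_{i\notin F_N}p_i/q_i\bigr)\bigl(\sum_{i\in F_N}p_i\bigr)\to 0$. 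The main obstacle is to balance the $\ell^1$-tail decay against the partial-sum growth of $p$, since $\|u_N\|_p\to\infty$ whenever (\bfU) fails. To this end I would iterate (\bfN): for each $m\ge 1$ one has $p\le\alpha^{(1)}\cdots\alpha^{(m)}q^{(m)}$ with $\alpha^{(k)}\in\ell^1(I)$ and $q^{(m)}\in P$, so that $p/q^{(m)}$ is dominated by a product of $\ell^1$-sequences, whose tail can be made arbitrarily rapidly decaying by taking $m$ large. Simultaneously, (\bfB) in the form $p\le Cq^2$ relates the growth of $\sum_{i\in F_N}p_i$ to further seminorms available in $P$. Choosing the enumeration of $I$ so that the dominating $\ell^1$-weights are non-increasing along $(i_k)$, and taking $m$ sufficiently large, makes the two rates balance and establishes (ii). Lemma~\ref{lemma:LR} then yields approximate contractibility of $A$.
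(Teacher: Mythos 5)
Your proposal does not prove the statement at hand. The statement is Lemma~\ref{lemma:LR} itself --- the Lawson--Read criterion, an assertion about an \emph{arbitrary} Fr\'echet algebra $A$ equipped with nets $(u_\lambda)$ and $(d_\lambda)$ satisfying (i)--(iv) --- and your very first sentence is ``apply Lemma~\ref{lemma:LR}.'' What you then sketch is a proof of Theorem~\ref{thm:BN_appr} (approximate contractibility of K\"othe algebras satisfying (\bfB) and (\bfN)), taking the lemma as a black box. As a proof of the lemma this is circular, and it cannot be repaired by specialization: the lemma concerns general Fr\'echet algebras, not K\"othe algebras, and a genuine proof must start from an arbitrary Fr\'echet $A$-bimodule $X$ and an arbitrary continuous derivation $D\colon A\to X$, and manufacture from $(u_\lambda)$, $(d_\lambda)$ and hypotheses (i)--(iv) a net of inner derivations converging pointwise to $D$; conditions (ii) and (iv) are precisely what is needed to make the resulting error terms vanish. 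Note also that the paper itself does not reprove this lemma: it quotes \cite[Lemma 2.7]{LR} and merely remarks that the argument given there (together with that of \cite[Theorem 2.4]{LR}, on which it rests) goes through without local $m$-convexity. Nothing in your text engages with that argument.

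Even read charitably as a blind proof of Theorem~\ref{thm:BN_appr}, your verification of condition (ii) has a real gap. The balancing you describe (``choose the enumeration of $I$ so that the dominating $\ell^1$-weights are non-increasing, and iterate (\bfN)'') does not work as stated: the sets $F_N$ must be fixed independently of $p$ and $a$, whereas the $\ell^1$-weights furnished by (\bfN) depend on $p$; moreover, with the $\ell^1$-seminorms you must control $\| u_N\|_p=\sum_{i\in F_N}p_i$, which is unbounded whenever (\bfU) fails, against a tail whose decay you never quantify. The paper's proof of Theorem~\ref{thm:BN_appr} sidesteps exactly this difficulty: since Lemma~\ref{lemma:LR} permits \emph{any} generating family of seminorms, nuclearity (in the form $\lambda(P)=\lambda^\infty(P)$) allows one to work with the sup-seminorms $\|\cdot\|_p^\infty$; the finite sets $J_n$ are then built by splitting $I$ into $I'=\{i : p_i>1\}$ and $I''=\{i : p_i\le 1\}$, taking $J_n'=\{i\in I' : q_i\le n\}$ (finite thanks to (\bfN)) so that $\| u_n\|_p^\infty\le n$ by construction, and the tail over $I'\setminus J_n'$ is killed by (\bfB) via
\[
\sup_{i\in I'\setminus J_n'} |a_i|\,p_i\, n\le \sup_{i\in I'\setminus J_n'} |a_i|\,q_i^2\to 0 .
\]
These are the ingredients a correct argument needs (see the reduction in Lemma~\ref{lemma:a-au}); your sketch contains none of them --- and in any case that theorem is not the statement you were asked to prove.
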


\begin{remark}
In \cite{LR}, the authors consider only locally $m$-convex Fr\'echet algebras.
However, the argument used in \cite{LR} shows that both Lemma~\ref{lemma:LR}
and \cite[Theorem 2.4]{LR}, on which Lemma~\ref{lemma:LR} is based,
are true for all Fr\'echet algebras, not necessarily locally $m$-convex.
\end{remark}

In the case of K\"othe algebras, Lemma~\ref{lemma:LR} can be simplified as follows.

\begin{lemma}
\label{lemma:a-au}
Let $A=\lambda(P)$ be a K\"othe algebra, and let
$\Pi\subset A$ denote the set of characteristic functions $\chi_J$,
where $J$ runs over the family of all finite subsets of $I$.
Suppose that for each $p\in P$ and each $a\in A$ there exists
$u\in\Pi$ such that
\begin{gather}
\label{a-au}
\| a-au\|_p<1,\\
\label{a-au_u}
\| a-au\|_p \| u\|_p < 1.
\end{gather}
Then $A$ is approximately contractible.
\end{lemma}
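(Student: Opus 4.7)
The plan is to apply Lemma~\ref{lemma:LR}. The crucial observation is that, for $u_\lambda=\chi_{J_\lambda}$ with $J_\lambda$ a finite subset of $I$, the naive choice $d_\lambda=u_\lambda\Tens u_\lambda$ fails condition (iv), but can be replaced by the \emph{diagonal}
\[
d_\lambda=\sum_{j\in J_\lambda}e_j\Tens e_j\in A\Ptens A,
\]
where $e_j\in A$ denotes the standard unit vector at $j$. For any $a\in A$ one has $a\cdot e_j=a_je_j=e_j\cdot a$ coordinatewise, so
\[
a\cdot d_\lambda=\sum_{j\in J_\lambda}a_j(e_j\Tens e_j)=d_\lambda\cdot a,
\]
and condition (iv) of Lemma~\ref{lemma:LR} holds trivially (not merely in the limit). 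Since $\chi_{J_\lambda}$ is idempotent, $\pi(d_\lambda)=\sum_{j\in J_\lambda}e_j=u_\lambda=2u_\lambda-u_\lambda^2$, which gives condition (iii).

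What remains is to manufacture a net $(u_\lambda)$ satisfying (i) and (ii). I will index $\Lambda$ by triples $(F,p,\eps)$, with $F\subset A$ finite, $p\in P$, and $\eps>0$, ordered by inclusion of $F$, dominance of $p$, and decrease of $\eps$. For $\lambda=(F,p,\eps)$ form the coordinatewise majorant $b_\lambda=\eps^{-1}\sum_{a\in F}|a|\in A$ and apply the hypothesis of the lemma to $b_\lambda$ to obtain some $u_\lambda=\chi_{J_\lambda}\in\Pi$ with $\|b_\lambda-b_\lambda u_\lambda\|_p<1$ and $\|b_\lambda-b_\lambda u_\lambda\|_p\|u_\lambda\|_p<1$. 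Since $|a_i|\le\eps\,b_{\lambda,i}$ for every $a\in F$ and every $i\in I$, these estimates upgrade to $\|a-au_\lambda\|_p<\eps$ and $\|a-au_\lambda\|_p\|u_\lambda\|_p<\eps$ for all $a\in F$. By commutativity of $A$, $au_\lambda=u_\lambda a$, so $(u_\lambda)$ is a two-sided approximate identity, giving (i), and the second bound gives (ii).

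All four hypotheses of Lemma~\ref{lemma:LR} are thus satisfied, and $A$ is approximately contractible. The only nontrivial step is the choice of $d_\lambda$: the natural $u_\lambda\Tens u_\lambda$ would produce $au_\lambda\Tens u_\lambda-u_\lambda\Tens au_\lambda$, whose projective-tensor seminorm is dominated by $2\|a\|_p\|u_\lambda\|_p$ and need not tend to $0$, while the diagonal automatically centralizes every element of $A$ and still maps to $u_\lambda$ under $\pi$. The rest (scaling by $\eps$ and the majorant trick to upgrade from the fixed bound ``$<1$'' for a single vector to ``$<\eps$'' uniformly over finite families) is routine packaging of the hypothesis.
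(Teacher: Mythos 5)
Your proposal is correct and follows essentially the same route as the paper's own proof: the same directed set indexed by triples $(F,p,\eps)$, the same majorant $b=\eps^{-1}\sum_{a\in F}|a|$ to upgrade the hypothesis from a single element to a finite family, and the same diagonal element $d_\lambda=\sum_{j\in J_\lambda}e_j\otimes e_j$, which centralizes $A$ exactly and satisfies $\pi(d_\lambda)=u_\lambda=2u_\lambda-u_\lambda^2$. Nothing further is needed.
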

\begin{proof}
Let $\mathscr F$ denote the family of all finite subsets of $A$, and let
\[
\Lambda=\{ (F,p,\eps) : F\in\mathscr F,\; p\in P,\; \eps>0\}.
\]
We make $\Lambda$ into a directed poset by setting
\[
(F,p,\eps)\preceq (F',p',\eps') \quad\iff\quad F\subset F',\; p\le p',\; \eps\ge\eps'.
\]
Fix any $\lambda=(F,p,\eps)\in\Lambda$, and define $b\in A$ by
\[
b=\eps^{-1}\sum_{a\in F} |a|.
\]
Using \eqref{a-au} and \eqref{a-au_u}, find $u_\lambda=\chi_J\in\Pi$ such that
\begin{gather*}
\| b-bu_\lambda\|_p<1,\\
\| b-bu_\lambda\|_p \| u_\lambda\|_p < 1.
\end{gather*}
Since
\begin{equation*}
\| a-au\|_p=\sum_{i\in I\setminus J} |a_i| p_i,
\end{equation*}
it follows that
\begin{gather*}
\| a-au_\lambda\|_p<\eps \qquad (a\in F),\\
\| a-au_\lambda\|_p \| u_\lambda\|_p < \eps \qquad (a\in F).
\end{gather*}
Therefore the net $(u_\lambda)_{\lambda\in\Lambda}$ satisfies
conditions (i) and (ii) of Lemma~\ref{lemma:LR}.
Now, following \cite[Example 3.1]{LR}, define $d_\lambda\in A\Ptens A$ by
\[
d_\lambda=\sum_{i\in F} e_i\otimes e_i,
\]
where $e_i$ stands for the function on $I$ which is $1$ at $i$, $0$ elsewhere.
Then it is clear that
\begin{gather*}
\pi(d_\lambda)=u_\lambda=2u_\lambda-u_\lambda^2;\\
a\cdot d_\lambda=d_\lambda\cdot a \qquad (a\in A).
\end{gather*}
Thus conditions (iii) and (iv) of Lemma~\ref{lemma:LR} are also satisfied,
and so $A$ is approximately contractible.
\end{proof}

\begin{proof}[Proof of Theorem~\ref{thm:BN_appr}]
Fix any $a\in A$ and $p\in P$, and find $q\in P$ and $\lambda\in\ell^1(I)$ such that
$p\le q$ and $p\le\lambda q$. Let
\[
I'=\{ i\in I : p_i>1\},\quad I''=\{ i\in I : p_i\le 1\}.
\]
Note that $\lambda_i>0$ whenever $i\in I'$.
For each $n\in\N$, define the subset $J'_n\subset I'$ as follows.
If $I'$ is finite, we set $J'_n=I'$. Otherwise, let
\[
J'_n=\{ i\in I' : q_i\le n\}.
\]
For each $i\in I'$ we have
\[
q_i\ge p_i/\lambda_i>1/\lambda_i\to\infty\qquad (i\in I',\; i\to\infty).
\]
Therefore $J'_n$ is finite.

Choose a finite subset $J''_n\subset I''$ such that
\begin{equation}
\label{1/nc_n}
\sup_{i\in I''\setminus J''_n} |a_i| p_i<1/n^2.
\end{equation}
Finally, let $J_n=J'_n\sqcup J''_n$ and $u_n=\chi_{J_n}$.
Since $P$ satisfies (\bfN), we have $\lambda(P)=\lambda^\infty(P)$ topologically
(see the discussion after Definition~\ref{def:N}),
and so the topology on $\lambda(P)$ is determined by the seminorms
$\|\cdot\|_p^\infty$ ($p\in P$). Therefore
by Lemma~\ref{lemma:a-au}, it suffices to show that
\[
\| a-au_n\|_p^\infty (1+\| u_n\|_p^\infty)\to 0 \qquad (n\to\infty).
\]
Since
\[
\| u_n\|^\infty_p=\sup_{j\in J_n} p_j \le n,
\]
we have
\[
\| a-au_n\|_p^\infty (1+\| u_n\|_p^\infty)
\le 2n\| a-au_n\|_p^\infty,
\]
and it remains to show that
\begin{equation}
\label{a-au_c_to_infty}
n\| a-au_n\|_p^\infty \to 0 \qquad (n\to\infty).
\end{equation}
Observe that
\begin{equation}
\label{a-au_c}
n\| a-au_n\|_p^\infty
=\sup_{i\in I\setminus J_n} |a_i|p_i n
=\max\Bigl\{ \sup_{i\in I'\setminus J'_n} |a_i|p_i n,
\sup_{i\in I''\setminus J''_n} |a_i|p_i n\Bigr\}.
\end{equation}
By \eqref{1/nc_n}, we have
\begin{equation}
\label{a-au_c''}
\sup_{i\in I''\setminus J''_n} |a_i|p_i n\le 1/n\to 0\qquad (n\to\infty).
\end{equation}
For each $i\in I'\setminus J'_n$ we have $q_i>n$. Therefore
\begin{equation}
\label{a-au_c'}
\sup_{i\in I'\setminus J'_n} |a_i|p_i n
\le \sup_{i\in I'\setminus J'_n} |a_i|q_i^2\to 0 \qquad (n\to\infty),
\end{equation}
because $P$ satisfies (\bfB). Now  \eqref{a-au_c_to_infty} follows
from \eqref{a-au_c}, \eqref{a-au_c''}, and \eqref{a-au_c'}.
\end{proof}

\begin{example}
Theorem~\ref{thm:BN_appr} implies that
$\Lambda_\infty(\alpha)$ is approximately contractible whenever
$\sup_n (\log n)/\alpha_n<\infty$
(see Examples~\ref{example:Lambda_R_nucl} and~\ref{example:Lambda_R_bipr}).
In particular, $s$ and $\cH(\CC)$ are approximately contractible.
\end{example}

\begin{example}
As was shown in \cite{DLZ}, $\ell^1$ is not approximately contractible
(although it satisfies (\bfB), see Example~\ref{example:l1_bipr}).
\end{example}

Let us now discuss some other properties of K\"othe algebras
related to approximate contractibility. One such property
is {\em idempotence}. Given an algebra $A$, let
\[
A^2=\spn\{ ab : a,b\in A\}.
\]
In \cite{DLZ}, the authors conjectured that there is a relation
between the approximate contractibility of a Banach sequence algebra $A$
and the property $A=A^2$. Below we study this relation in the case
of K\"othe algebras.

\begin{lemma}
\label{lemma:A=A^2}
Let $A=\lambda(P)$ be a K\"othe algebra. Then
\[
A^2=\{ a^2 : a\in A\}=\{ a\in A : \sqrt{|a|}\in A\}.
\]
\end{lemma}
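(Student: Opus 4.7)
The plan is to prove the two equalities of Lemma~\ref{lemma:A=A^2} separately, exploiting the fact that $\lambda(P)$ is a solid sublattice of $\CC^I$ with respect to the pointwise order: the seminorms $\| x\|_p=\sum_i |x_i|p_i$ depend only on $|x|$, so if $|x|\le y$ pointwise and $y\in\lambda(P)$, then $x\in\lambda(P)$.

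First I would establish $\{a^2:a\in A\}=\{a\in A:\sqrt{|a|}\in A\}$, which is purely pointwise. For the inclusion $\subseteq$, if $b=a^2$ with $a\in A$, then $|b|=|a|^2$, so $\sqrt{|b|}=|a|$, and $|a|\in A$ by solidity. For $\supseteq$, given $a\in A$ with $\sqrt{|a|}\in A$, at each index $i$ I choose (via the polar form) a square root $b_i\in\CC$ of $a_i$ with $|b_i|=\sqrt{|a_i|}$; then $|b|=\sqrt{|a|}\in A$, so $b\in A$, and $a=b^2$.

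Next I would prove $A^2=\{a^2:a\in A\}$. The inclusion $\supseteq$ is immediate from $a^2=a\cdot a$. For $\subseteq$, take $c\in A^2$ and write $c=\sum_{k=1}^n a_kb_k$ with $a_k,b_k\in A$. Using the subadditivity $\sqrt{x+y}\le\sqrt{x}+\sqrt{y}$ for $x,y\ge 0$ together with the AM--GM inequality $\sqrt{xy}\le(x+y)/2$, I bound pointwise
\[
\sqrt{|c_i|}\le\sqrt{\textstyle\sum_{k=1}^n |a_{k,i}||b_{k,i}|}
\le\sum_{k=1}^n\sqrt{|a_{k,i}||b_{k,i}|}
\le\tfrac{1}{2}\sum_{k=1}^n(|a_{k,i}|+|b_{k,i}|).
\]
The right-hand side is a nonnegative element of $A$ (as a finite sum of absolute values of elements of $A$), so solidity yields $\sqrt{|c|}\in A$. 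The previous paragraph then produces $b\in A$ with $c=b^2$.

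I do not expect any genuine obstacle: every step reduces to elementary pointwise inequalities combined with the fact that $\lambda(P)$ is determined by $|x|$. The only tiny subtlety is the measurable (here, trivial) choice of complex square roots $b_i$ in the first step, which is handled by polar decomposition.
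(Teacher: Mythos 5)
Your proof is correct and follows essentially the same route as the paper's: the paper likewise produces a genuine square root via polar decomposition (choosing $b_i$ with $b_i^2=a_i$ and $|b_i|=\sqrt{|a_i|}$) and bounds $\sqrt{|a_i|}\,p_i$ by $\sum_k(|b_{ki}|+|c_{ki}|)p_i$ using the same AM--GM-type estimate, only written after multiplying by $p_i$ and summing rather than pointwise via solidity as you do. The difference is purely cosmetic.
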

\begin{proof}
Suppose that $\sqrt{|a|}\in A$. For each $i\in I$, find $\lambda_i\in\CC$
such that $a_i=\lambda_i^2 |a_i|$, and define $b_i=\lambda_i\sqrt{|a_i|}$.
We then have $b=(b_i)\in A$ and $a=b^2$. Clearly, this implies that $a\in A^2$.

Conversely, suppose that $a\in A^2$, and write
\[
a=\sum_{k=1}^n b_k c_k\quad (b_k,c_k\in A).
\]
For each $p\in P$ and each $i\in I$ we have
\[
|a_i|p_i^2
\le \sum_k |b_{ki}| |c_{ki}| p_i^2
\le \Bigl(\sum_k (|b_{ki}| p_i + |c_{ki}| p_i)\Bigr)^2.
\]
Therefore
\[
\sum_i \sqrt{|a_i|} p_i
\le\sum_i\sum_k (|b_{ki}|p_i + |c_{ki}|p_i)
=\sum_k (\| b_k\|_p+\| c_k\|_p)<\infty,
\]
which implies that $\sqrt{|a|}\in A$.
\end{proof}

\begin{remark}
An inspection of the above proof shows that, if $A$ is a K\"othe algebra
over $\R$, then
\[
A^2=\{ ab : a,b\in A\}=\{ a\in A : \sqrt{|a|}\in A\}.
\]
\end{remark}

Let $\lambda(P)$ be a K\"othe space. Recall that
the {\em K\"othe-Toeplitz dual} $\lambda(P)^\times$ is defined by
\[
\lambda(P)^\times=
\Bigl\{ y=(y_i)\in\CC^I :
\| y\|_x=\sum_i |y_i x_i| <\infty\quad\;\forall\, x\in\lambda(P)\Bigr\}.
\]
In other words, $\lambda(P)^\times$ is the K\"othe space $\lambda(P^\times)$,
where $P^\times=\{ (|x_i|): x\in\lambda(P)\}$.

\begin{lemma}
\label{lemma:nucl_idemp}
Let $A=\lambda(P)$ be a K\"othe algebra satisfying {\upshape (\bfB)}.
Suppose that $A^\times$ is nuclear. Then $A=A^2$.
\end{lemma}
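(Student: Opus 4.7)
The plan is to invoke Lemma~\ref{lemma:A=A^2} and reduce the claim $A=A^2$ to showing that $\sqrt{|a|}\in A$ for every $a\in A$. Fixing such an $a$ and an arbitrary weight $p\in P$, it suffices to prove $\sum_i\sqrt{|a_i|}\,p_i<\infty$.

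The first ingredient is the Grothendieck--Pietsch criterion applied to $A^\times=\lambda(P^\times)$. Since $a\in A$, the sequence $|a|$ lies in $P^\times$, so nuclearity of $A^\times$ furnishes some $b\in A$ and $\alpha=(\alpha_i)\in\ell^1(I)$ with $|a_i|\le\alpha_i|b_i|$ for all $i$. This lets me replace $\sqrt{|a_i|}$ in the target sum by $\sqrt{\alpha_i|b_i|}=\sqrt{\alpha_i}\cdot\sqrt{|b_i|}$.

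The second ingredient is (\bfB). Since $\lambda(P)$ is already a K\"othe algebra, $P\prec P^2$ holds automatically (Proposition~\ref{prop:Kothe_alg}), and (\bfB) adds the reverse domination $P^2\prec P$. Thus I can choose $r\in P$ and $C>0$ with $p_i^2\le Cr_i$ for every $i$.

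The two inputs fit together via Cauchy--Schwarz: writing the summand as $\sqrt{\alpha_i}\cdot(\sqrt{|b_i|}\,p_i)$ and squaring-and-summing each factor bounds the tail by $\|\alpha\|_1^{1/2}\bigl(\sum_i|b_i|\,p_i^2\bigr)^{1/2}\le\|\alpha\|_1^{1/2}(C\|b\|_r)^{1/2}<\infty$, using $b\in A$ in the last step. No serious obstacle is visible; the only conceptual point to get right is that nuclearity must be applied on the \emph{dual} side, where the datum $|a|$ plays the role of a weight, while (\bfB) is used precisely to absorb the $p_i^2$ produced by Cauchy--Schwarz back into a finite seminorm of $b$.
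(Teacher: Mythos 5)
Your proof is correct and follows essentially the same route as the paper's: reduce to $\sqrt{|a|}\in A$ via Lemma~\ref{lemma:A=A^2}, dominate $|a|$ using the Grothendieck--Pietsch criterion for $P^\times$, use {\upshape(\bfB)} to absorb $p_i^2$ into a single weight, and finish with Cauchy--Schwarz. The only (harmless) difference is in the bookkeeping: you apply nuclearity of $A^\times$ once and put Cauchy--Schwarz on $\sum_i\sqrt{\alpha_i}\,\sqrt{|b_i|\,p_i^2}$, whereas the paper applies it twice to get $|a|\le\lambda\mu c$ and then splits off $\sup_i\sqrt{c_iq_i}$ --- your version is slightly more economical.
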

\begin{proof}
Fix any $a\in A$.
Since $A^\times=\lambda(P^\times)$ is nuclear, the Grothendieck--Pietsch
criterion implies that $P^\times$ satisfies (\bfN).
Therefore there exist $\lambda\in\ell^1$ and $b\in P^\times$ such that $|a|\le\lambda b$.
For the same reason, there exist $\mu\in\ell^1$ and $c\in P^\times$ such that $b\le\mu c$.
Now take any $p\in P$ and find $q\in P$ and $C>0$ such that $p^2\le C^2q$.
We have
\begin{equation}
\label{sqrt_a}
\sum_i \sqrt{|a_i|} p_i
\le C\sum_i\sqrt{|a_i|q_i}
\le C\sum_i\sqrt{\lambda_i\mu_i c_i q_i}
\le C\sup_i\sqrt{ c_i q_i} \sum_i\sqrt{\lambda_i\mu_i}.
\end{equation}
Since $\lambda,\mu\in\ell^1$, we have $\sqrt{\lambda},\sqrt{\mu}\in\ell^2$,
and so $\sum_i\sqrt{\lambda_i\mu_i}<\infty$. Since $c\in A$, we have
$\sup_i\sqrt{c_i q_i}\le\sqrt{\| c\|_q}<\infty$.
Now \eqref{sqrt_a} implies that $\sum_i \sqrt{|a_i|} p_i<\infty$, i.e., $\sqrt{|a|}\in A$.
Using Lemma~\ref{lemma:A=A^2}, we see that $a\in A^2$, and so $A=A^2$, as claimed.
\end{proof}

\begin{lemma}
\label{lemma:log_nucl}
Let $A=\lambda(P)$ be a K\"othe algebra on $\N$ satisfying {\upshape (\bfB)}.
Suppose that $p_n\ge 1$ for all
$p\in P$ and all $n\in\N$, and suppose that there exists $p\in P$ such that
\begin{equation}
\label{log}
\sup_n \frac{\log n}{\log p_n}<\infty.
\end{equation}
Then $A$ is nuclear.
\end{lemma}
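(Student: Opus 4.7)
The plan is to verify the Grothendieck--Pietsch criterion (\bfN) directly: for every $p\in P$ I must produce $q\in P$ and $\alpha\in\ell^1(\N)$ with $p_n\le\alpha_n q_n$. I will take $\alpha_n=C/n^2$, which is summable, and use the logarithmic bound \eqref{log} together with (\bfB) to absorb the factor $n^2$.

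Fix $p\in P$ and let $p^*\in P$ be a witness to \eqref{log}, so $M:=\sup_n\log n/\log p^*_n$ is finite and hence $n\le (p^*_n)^M$ for every $n$ (for $n=1$ this uses only $p^*_1\ge 1$). By axiom (P2) choose $r\in P$ dominating both $p$ and $p^*$ pointwise. Since $r_n\ge 1$,
\[
n^2 p_n \le (p^*_n)^{2M}\, p_n \le r_n^{2M+1}.
\]

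The next step is to dominate $r^{2M+1}$ by an element of $P$. Condition (\bfB) gives, for every $s\in P$, some $s'\in P$ and $C_s>0$ with $s^2\le C_s s'$. Iterating this $k$ times produces $s_k\in P$ and a constant $C^{(k)}>0$ with $r^{2^k}\le C^{(k)} s_k$. Choose $k$ with $2^k\ge 2M+1$ and use $r_n\ge 1$ once more to get $r^{2M+1}\le r^{2^k}\le C^{(k)} s_k$. Setting $q=s_k$ and $C=C^{(k)}$ yields $n^2 p_n\le Cq_n$, i.e. $p_n\le (C/n^2)\,q_n$, which is exactly (\bfN) with $\alpha_n=C/n^2\in\ell^1(\N)$.

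There is no serious obstacle. The only delicate point is the iterated application of (\bfB), which is harmless because the hypothesis $p_n\ge 1$ guarantees $r_n\ge 1$, allowing any positive integer power of $r$ to be replaced by the next power of two at no cost. Note that $p_n\ge 1$ is used in two different places: once to combine $(p^*_n)^{2M}\,p_n$ into $r_n^{2M+1}$, and once to raise the exponent from $2M+1$ to $2^k$.
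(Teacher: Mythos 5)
Your proposal is correct and follows essentially the same route as the paper: both verify the Grothendieck--Pietsch condition (\bfN) by using \eqref{log} to get $n\le (p^*_n)^M$, then absorbing the resulting power of a dominating weight into a single element of $P$ via condition (\bfB), producing $\sum_n p_n/q_n<\infty$. The only difference is cosmetic -- you make explicit the iteration of (\bfB) and the use of $p_n\ge 1$ to pass from the exponent $2M+1$ to a power of two, a step the paper's proof states without elaboration.
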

\begin{proof}
Since $P$ is directed (Axiom (P2)), the topology on $\lambda(P)$ is generated by all
seminorms $\|\cdot\|_q$ with $q\in P,\; q\ge p$. Therefore me may assume that
\eqref{log} holds for all $p\in P$.
Now fix any $p\in P$ and find $k\in\N$ such that $\log n\le k\log p_n$ for all $n\in\N$.
Since $P$ satisfies (\bfB), there exist $C>0$ and $q\in P$ such that
$p^{2k+1}\le Cq$. We have
\[
\sum_n \frac{p_n}{q_n}
\le C\sum_n \frac{1}{p_n^{2k}}
\le C\sum_n \frac{1}{n^2}<\infty.
\]
Now the Grothendieck--Pietsch criterion implies that $A$ is nuclear.
\end{proof}

\begin{lemma}
\label{lemma:idemp_log}
Let $A=\lambda(P)$ be a K\"othe algebra on $\N$ such that $A=A^2$.
Suppose that $1\le p_n\le p_{n+1}$ for all $p\in P$ and all $n\in\N$.
Then there exists $p\in P$ such that \eqref{log} holds.
\end{lemma}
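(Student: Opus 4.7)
The plan is to apply Baire's category theorem to a natural closed exhaustion of $A$ arising from the square-root criterion of Lemma~\ref{lemma:A=A^2}, and then to use monotonicity to turn the resulting summability into a linear lower bound for some $p^{(l)}$.

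Enumerate $P=\{p^{(k)}\}_{k\in\N}$ with $p^{(k)}\le p^{(k+1)}$. By Lemma~\ref{lemma:A=A^2}, the hypothesis $A=A^2$ is equivalent to the property that $\sum_n\sqrt{|a_n|}\,p_n<\infty$ for every $a\in A$ and every $p\in P$. For each $M\in\N$ set
\[
E_M=\Bigl\{a\in A:\sum_n\sqrt{|a_n|}\,p^{(1)}_n\le M\Bigr\}.
\]
Fatou's lemma (together with the fact that convergence in $A$ entails coordinatewise convergence) shows that each $E_M$ is closed in $A$, and by the reformulation above $A=\bigcup_M E_M$. Since $A$ is a Fr\'echet space, Baire's theorem produces some $M\in\N$ for which $E_M$ has nonempty interior; concretely, there exist $a_0\in A$, $l\in\N$, and $\eps>0$ such that $a_0+\{a\in A:\|a\|_{p^{(l)}}\le\eps\}\subseteq E_M$.

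The next step is to extract a Cauchy--Schwarz-type inequality by testing this inclusion against finitely supported perturbations. Given a finite $S\subseteq\N$ and nonnegative reals $(t_n)_{n\in S}$ with $\sum_{n\in S}t_n\,p^{(l)}_n\le\eps$, choose unimodular scalars $\theta_n$ so that $|a_{0,n}+t_n\theta_n|=|a_{0,n}|+t_n$, and set $a_n=a_{0,n}+t_n\theta_n$ for $n\in S$ and $a_n=a_{0,n}$ otherwise. Then $a\in E_M$, and dropping the indices $n\notin S$ together with the bound $\sqrt{|a_{0,n}|+t_n}\ge\sqrt{t_n}$ gives $\sum_{n\in S}\sqrt{t_n}\,p^{(1)}_n\le M$. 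Applying this to the optimal choice $t_n=\lambda(p^{(1)}_n/p^{(l)}_n)^2$, with $\lambda>0$ normalizing the weighted $\ell^1$-sum to $\eps$, converts the inequality into
\[
\sum_{n\in S}\frac{(p^{(1)}_n)^2}{p^{(l)}_n}\le\frac{M^2}{\eps}.
\]
Letting $S\uparrow\N$ and using $p^{(1)}_n\ge 1$ then yields $\sum_n 1/p^{(l)}_n\le M^2/\eps<\infty$.

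Finally, the monotonicity of $p^{(l)}$ closes the argument: from $p^{(l)}_n\le p^{(l)}_N$ for $n\le N$ we obtain $N/p^{(l)}_N\le\sum_{n=1}^N 1/p^{(l)}_n\le M^2/\eps$, hence $p^{(l)}_N\ge(\eps/M^2)\,N$ for every $N\in\N$, which in turn gives~\eqref{log} with $p=p^{(l)}$. The main subtlety I expect is in the perturbation step: choosing the phases $\theta_n$ and the scaling $\lambda$ so that Baire's raw output collapses cleanly into the Cauchy--Schwarz sum $\sum(p^{(1)}_n)^2/p^{(l)}_n$; after that, the remainder is a one-line manipulation based on monotonicity and $p^{(1)}_n\ge 1$.
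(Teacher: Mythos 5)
Your proof is correct, but it takes a genuinely different route from the paper's. The paper argues by contradiction: assuming no $p\in P$ satisfies \eqref{log}, it picks indices $k_n$ with $p^{(n)}_{k_n}\le k_n^{1/n}$ and builds an explicit block sequence $a\in A$ with $\sqrt[4]{|a|}\notin\ell^1\supseteq A$, so that two applications of Lemma~\ref{lemma:A=A^2} force $a\notin A^2=A$, a contradiction. You instead make the hypothesis quantitative: Baire's theorem applied to the closed exhaustion $E_M$ upgrades ``$\sqrt{|a|}\in A$ for all $a\in A$'' to a uniform bound on a seminorm ball, and the extremal choice $t_n\propto(p^{(1)}_n/p^{(l)}_n)^2$ (the equality case of Cauchy--Schwarz) converts that bound into $\sum_n(p^{(1)}_n)^2/p^{(l)}_n<\infty$, hence $\sum_n 1/p^{(l)}_n<\infty$ and $p^{(l)}_N\ge cN$. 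This intermediate conclusion is strictly stronger than \eqref{log} (which only asks $p_n\ge n^{1/k}$), at the price of invoking completeness via Baire, which the paper's elementary gliding-hump construction avoids. One small shared loose end: $p^{(l)}_N\ge cN$ controls $\log N/\log p^{(l)}_N$ only for large $N$, since for small $N$ one would also need $p^{(l)}_N>1$; but the paper's own argument has the identical degeneracy (it needs the witnesses $k_n$ to be unbounded), so \eqref{log} must in any case be read asymptotically, and under that reading your argument is complete.
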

\begin{proof}
Without loss of generality, we may assume that
$P=\{ p^{(n)} : n\in\N\}$, and that $p^{(n)}\le p^{(n+1)}$ for all $n$.
Assume, towards a contradiction, that none of $p\in P$ satisfies \eqref{log}.
Then for each $n\in\N$ there exists $k_n\in\N$ such that
\[
\frac{\log k_n}{\log p_{k_n}^{(n)}}\ge n, \quad\text{i.e., }
p_{k_n}^{(n)}\le k_n^{1/n}.
\]
We may also assume that $k_{n+1}\ge 2k_n$ for all $n\in\N$. Set $k_0=0$
and define $a\in\CC^\N$ by
\[
a_m=\frac{1}{k_n^3}\quad\text{for } k_{n-1}<m\le k_n.
\]
Then for each $i\in\N$ we have
\[
\begin{split}
\sum_{m>k_{i-1}} |a_m| p_m^{(i)}
&=\sum_{n\ge i} \;\sum_{m=k_{n-1}+1}^{k_n} |a_m|p_m^{(i)}
\le \sum_{n\ge i}\; \sum_{m=k_{n-1}+1}^{k_n} \frac{1}{k_n^3} p_{k_n}^{(n)}\\
&\le \sum_{n\ge i}\; \sum_{m=k_{n-1}+1}^{k_n} k_n^{\frac{1}{n}-3}
= \sum_{n\ge i} (k_n-k_{n-1}) k_n^{\frac{1}{n}-3}
\le \sum_{n\ge i} k_n^{\frac{1}{n}-2}<\infty,
\end{split}
\]
because $\frac{1}{n}-2<-\frac{3}{2}$ for $n\ge 2$. Therefore $a\in A$.
On the other hand,
\[
\sum_m \sqrt[4]{|a_m|}
=\sum_n \;\sum_{m=k_{n-1}+1}^{k_n}\; \frac{1}{k_n^{3/4}}
\ge\sum_n \frac{k_n-k_{n-1}}{k_n}
\ge\sum_n \frac{1}{2}=\infty,
\]
showing that $\sqrt[4]{|a|}\notin\ell^1$, and, {\em a fortiori},
$\sqrt[4]{|a|}\notin A$. Applying Lemma~\ref{lemma:A=A^2} twice,
we see that $a\notin A$. The resulting contradiction completes the proof.
\end{proof}

Now we can summarize the above discussion as follows.

\begin{theorem}
\label{thm:idemp_appr}
Let $A=\lambda(P)$ be a K\"othe algebra on $\N$ satisfying {\upshape (\bfB)}.
Suppose that $1\le p_n\le p_{n+1}$ for all $p\in P$ and all $n\in\N$.
Then the following conditions are equivalent:
\begin{compactenum}
\item[{\upshape (i)}] $A$ is nuclear;
\item[{\upshape (ii)}] $A^\times$ is nuclear;
\item[{\upshape (iii)}] $A=A^2$;
\item[{\upshape (iv)}] there exists $p\in P$ such that
\[
\sup_n \frac{\log n}{\log p_n}<\infty.
\]
\end{compactenum}
Each of the above conditions implies that $A$ is approximately contractible.
\end{theorem}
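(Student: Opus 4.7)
The plan is to close the loop $(\mathrm{i})\Rightarrow(\mathrm{ii})\Rightarrow(\mathrm{iii})\Rightarrow(\mathrm{iv})\Rightarrow(\mathrm{i})$ and then read off approximate contractibility from Theorem~\ref{thm:BN_appr}. Three of the four implications are already packaged in the lemmas preceding the theorem. Indeed, $(\mathrm{ii})\Rightarrow(\mathrm{iii})$ is Lemma~\ref{lemma:nucl_idemp} (whose hypothesis~(\bfB) is part of the standing assumption); $(\mathrm{iii})\Rightarrow(\mathrm{iv})$ is Lemma~\ref{lemma:idemp_log} (whose monotonicity condition matches $1\le p_n\le p_{n+1}$); and $(\mathrm{iv})\Rightarrow(\mathrm{i})$ is Lemma~\ref{lemma:log_nucl} (whose hypothesis $p_n\ge 1$ is assumed). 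So the only genuinely new work is to establish $(\mathrm{i})\Rightarrow(\mathrm{ii})$.

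To prove $(\mathrm{i})\Rightarrow(\mathrm{ii})$, I would verify the Grothendieck--Pietsch criterion directly for the K\"othe set $P^\times=\{(|x_i|):x\in\lambda(P)\}$ defining $A^\times=\lambda(P^\times)$. Given $x\in\lambda(P)$, the natural witness is $y_i=\mu_i=\sqrt{|x_i|}$, so that $|x_i|=\mu_i y_i$ trivially. Two verifications are needed: that $y\in\lambda(P)$ and that $\mu\in\ell^1$. For the first, nuclearity gives $\lambda(P)=\lambda^\infty(P)$ topologically, so it suffices to bound $\sup_i\sqrt{|x_i|}\,p_i=\sqrt{\sup_i|x_i|\,p_i^2}$ for every $p\in P$; condition~(\bfB) supplies $q\in P$ and $C>0$ with $p^2\le Cq$, and hence $\sup_i|x_i|\,p_i^2\le C\sup_i|x_i|\,q_i=C\|x\|_q^\infty<\infty$, showing $\sqrt{|x|}\in\lambda^\infty(P)=\lambda(P)$. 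For the second, the assumption $p_n\ge 1$ yields $\lambda(P)\subset\ell^1$, so $\mu=\sqrt{|x|}\in\lambda(P)\subset\ell^1$. This furnishes the Grothendieck--Pietsch witnesses for $P^\times$, proving $A^\times$ is nuclear.

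For the concluding assertion, once $(\mathrm{i})$ is in hand, $A$ satisfies (\bfB) by hypothesis and (\bfN) by nuclearity, so Theorem~\ref{thm:BN_appr} applies directly and gives approximate contractibility.

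The principal obstacle is the new implication $(\mathrm{i})\Rightarrow(\mathrm{ii})$; it is not hard once the right witness is spotted, but the choice $y_i=\mu_i=\sqrt{|x_i|}$ is pinned down by the need to exploit all three of the standing data simultaneously---the identity $\lambda(P)=\lambda^\infty(P)$ (from nuclearity), condition~(\bfB) (to trade $p^2$ for some $q\in P$), and the constraint $p_n\ge 1$ (to force $\lambda(P)\subset\ell^1$)---so the conceptual content is the observation that these three ingredients jointly guarantee $\sqrt{|x|}\in\lambda(P)\cap\ell^1$ for every $x\in\lambda(P)$.
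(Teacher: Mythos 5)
Your proposal is correct, and its global structure coincides with the paper's: the same cycle $(\mathrm{i})\Rightarrow(\mathrm{ii})\Rightarrow(\mathrm{iii})\Rightarrow(\mathrm{iv})\Rightarrow(\mathrm{i})$, with $(\mathrm{ii})\Rightarrow(\mathrm{iii})$, $(\mathrm{iii})\Rightarrow(\mathrm{iv})$, $(\mathrm{iv})\Rightarrow(\mathrm{i})$ delegated to Lemmas~\ref{lemma:nucl_idemp}, \ref{lemma:idemp_log}, \ref{lemma:log_nucl} respectively, and the final claim read off from Theorem~\ref{thm:BN_appr}. The one place where you diverge is $(\mathrm{i})\Rightarrow(\mathrm{ii})$: the paper disposes of it by citing K\"othe's general theorem that for any metrizable K\"othe space nuclearity of $\lambda(P)$ implies nuclearity of $\lambda(P)^\times$ (\cite[Satz 7]{Kothe_nucl}), with no appeal to (\bfB) or to $p_n\ge 1$; you instead give a self-contained verification of the Grothendieck--Pietsch criterion for $P^\times$ via the witness $\alpha_i=q_i=\sqrt{|x_i|}$, using $\lambda(P)=\lambda^\infty(P)$ together with (\bfB) to get $\sqrt{|x|}\in\lambda(P)$ and the normalization $p_n\ge 1$ to get $\lambda(P)\subset\ell^1$. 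Your route is less general (it would not prove K\"othe's theorem for arbitrary nuclear K\"othe spaces) but is elementary, stays entirely within the paper's toolkit, and is in the same spirit as the proof of Lemma~\ref{lemma:nucl_idemp}; both arguments are valid under the theorem's hypotheses, so this is a legitimate alternative for that single implication.
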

\begin{proof}
$\mathrm{(i)}\Longrightarrow\mathrm{(ii)}$.
This is true for all metrizable K\"othe spaces \cite[Satz 7]{Kothe_nucl}.

$\mathrm{(ii)}\Longrightarrow\mathrm{(iii)}$. This follows from
Lemma~\ref{lemma:nucl_idemp}.

$\mathrm{(iii)}\Longrightarrow\mathrm{(iv)}$. This follows from
Lemma~\ref{lemma:idemp_log}.

$\mathrm{(iv)}\Longrightarrow\mathrm{(i)}$. This follows from
Lemma~\ref{lemma:log_nucl}.

Finally, (i) implies that $A$ is approximately contractible by Theorem \ref{thm:BN_appr}.
\end{proof}

\section{Open problems}
In conclusion, let us formulate some open problems.

\begin{problem}
\label{probl:bifl_wdb}
Let $A$ be a biflat Fr\'echet algebra. Is $\wdb A\le 2$?
\end{problem}

Recall that the answer is positive if $A$ is a Banach algebra
\cite[2.5.8]{X_HOA} or if $A$ is projective in $A\lmod$ (or in $\rmod A$)
\cite[Proposition 4.8]{Pir_msb}.

In fact, Problem~\ref{probl:bifl_wdb} can be reduced to the following.

\begin{problem}
Let $X$ be flat in $A\lmod$, and let
$Y$ be flat in $\rmod A$. Is $X\Ptens Y$ flat in $A\bimod A$?
\end{problem}

Again, the answer is positive in the case of Banach modules over Banach algebras
\cite[7.1.57]{X2}. The answer is also positive provided that either $X$ or $Y$
is projective \cite[Proposition 3.6]{Pir_msb}.

\begin{problem}
Let $A=\lambda(P)$ be an approximately contractible K\"othe algebra.
\begin{compactenum}
\item[(1)] Does $P$ satisfy (\bfB)?
\item[(2)] Does $A$ satisfy the conditions (i)--(iv) of Theorem \ref{thm:idemp_appr}?
In particular, must $A$ be nuclear?
\end{compactenum}
\end{problem}

Note that, if $A=A^2$, then $P$ satisfies (\bfB)
(see implication $\mathrm{(v)}\Longrightarrow\mathrm{(i)}$ in Theorem~\ref{thm:kothe_bipr}).
Therefore a positive answer to question (2) would imply a positive answer to question~(1).

\section*{Acknowledgments}
This paper is based on a lecture delivered at the 19$^\mathrm{th}$
International Conference on Banach Algebras held at
{\fontencoding{T1}\selectfont B\k{e}dlewo}, July 14--24,
2009. The support for the meeting by the Polish Academy of Sciences, the
European Science Foundation under the ESF-EMS-ERCOM partnership, and the
Faculty of Mathematics and Computer Science of the Adam Mickiewicz University
at Pozna\'n is gratefully acknowledged.
The author was also supported by the RFBR grant 08-01-00867,
by the Ministry of Education and Science of Russia (programme ``Development of the scientific
potential of the Higher School'', grant no. 2.1.1/2775),
and by the President of Russia grant MK-1173.2009.1.

The author is grateful to C.~J.~Read for explaining some points of \cite{LR}.

\end{document}